\documentclass[10pt]{article}
\usepackage[a4paper]{geometry}
\usepackage{amsthm}
\theoremstyle{plain}
\newtheorem{theorem}{Theorem}[section]
\newtheorem{definition}[theorem]{Definition}
\newtheorem{lemma}[theorem]{Lemma}
\newtheorem{corollary}[theorem]{Corollary}
\theoremstyle{remark}
\newtheorem{remark}{Remark}[section]

\usepackage[breaklinks,bookmarks=false]{hyperref}
\hypersetup{colorlinks, linkcolor=blue, citecolor=blue,
urlcolor=blue, plainpages=false, pdfwindowui=false,
pdfstartview={FitH}}

\usepackage{amsmath,amsfonts,amssymb}
\usepackage{graphicx,verbatim,mathtools}
\usepackage[only,llparenthesis,rrparenthesis]{stmaryrd}
\usepackage{hyperref}
\usepackage{bm}


\newcommand{\vphi}{\varphi}
\newcommand{\dd}{\mathrm{d}}
\renewcommand{\dim}{d}
\newcommand{\abs}[1]{\lvert#1\rvert}
\newcommand{\tends}{\rightarrow}
\newcommand{\norm}[1]{\lVert#1\rVert}
\newcommand{\p}{\partial}

\renewcommand{\th}{{h\tau}}
\newcommand{\lld}{\llparenthesis}
\newcommand{\rrd}{\rrparenthesis}
\DeclareMathOperator{\diam}{diam}
\DeclareMathOperator{\Div}{div}

\DeclareMathOperator*{\argmin}{argmin}
\newcommand{\eval}[2]{\left. #1\right|_{#2}}
\newcommand{\pair}[2]{\langle #1,#2 \rangle}
\newcommand{\R}{\mathbb{R}}
\newcommand{\Om}{\Omega}
\newcommand{\om}{\omega}
\newcommand{\DO}{\partial\Om}
\newcommand{\calT}{\mathcal{T}}
\newcommand{\calE}{\mathcal{E}}
\newcommand{\calP}{\mathcal{P}}
\newcommand{\calI}{\mathcal{I}}
\newcommand{\calB}{\mathcal{B}}
\newcommand{\calR}{\mathcal{R}}
\newcommand{\calQ}{\mathcal{Q}}
\newcommand{\calV}{\mathcal{V}}

\newcommand{\LH}{L^2(0,T;H^1_0(\Om))}
\newcommand{\HHm}{H^1(0,T;H^{-1}(\Om))}
\newcommand{\By}{\calB_Y}
\newcommand{\RTN}{\mathbf{RTN}} 
\newcommand{\Hdiv}{\bm{H}(\Div,\Om)}
\newcommand{\Hdivoma}{\bm{H}(\Div,\oma)}
\newcommand{\CR}{\widetilde{\calT^{n}}}
\newcommand{\Ta}{\widetilde{\calT^{\ta,n}}}

\newcommand{\RTNa}{\mathbf{RTN}_{p_{\ta}}(\Ta)}
\newcommand{\Pa}{\calP_{p_{\ta}}(\Ta)}
\newcommand{\Pal}{\calP_{p_{\ta}-1}(\Ta)}
\newcommand{\RTNal}{\mathbf{RTN}_{p_{\ta}-1}(\Ta)}
\newcommand{\Vh}{V_h}
\newcommand{\Vt}{V_{h\tau}}
\newcommand{\Vn}{V^n_h}
\newcommand{\VCR}{\widetilde{\Vn}}
\newcommand{\Vnm}{V^{n-1}_h}
\newcommand{\elCR}{\widetilde{K}}
\newcommand{\Uth}{\calI u_{\th}}
\newcommand{\ta}{\mathbf{a}}
\newcommand{\psia}{\psi_{\ta}}
\newcommand{\Vthan}{\bm{V}_{h\tau}^{\ta,n}}
\newcommand{\Qthan}{Q_{h\tau}^{\ta,n}}
\newcommand{\bvthj}{\bm{v}_{h,j}^{\ta,n}}
\newcommand{\oma}{{\om_{\ta}}}

\newcommand{\Pia}{\Pi_{\th}^{\ta,n}}
\newcommand{\calVh}{\mathcal{V}^n}
\newcommand{\calVhint}{\mathcal{V}_{\mathrm{int}}^n}
\newcommand{\calVhext}{\mathcal{V}_{\mathrm{ext}}^n}
\newcommand{\Va}{\bm{V}^{\ta,n}_h}
\newcommand{\Qa}{Q_h^{\ta,n}}
\newcommand{\Ra}{\calR_{\th}^{\ta,n}}
\newcommand{\Raj}{\calR_{h,j}^{\ta,n}}
\newcommand{\stha}{\bm{\sigma}_{\th}^{\ta,n}}
\newcommand{\rtha}{r_{\th}^{\ta,n}}
\newcommand{\sth}{\bm{\sigma}_{\th}}
\newcommand{\sthj}{\bm{\sigma}_{h,j}^{\ta,n}}
\newcommand{\rthj}{r_{h,j}^{\ta,n}}
\newcommand{\tautha}{\bm{\tau}_{\th}^{\ta,n}}
\newcommand{\tauthj}{\bm{\tau}^{\ta,n}_{h,j}}
\newcommand{\gtautha}{g_{\th}^{\ta,n}}
\newcommand{\fthj}{f_{h,j}^{\ta,n}}
\newcommand{\bxithj}{\bxi_{h,j}^{\ta,n}}
\newcommand{\bxi}{\bm{\xi}}
\newcommand{\etaEq}{\eta_{\mathrm{F},K}^{n}}
\newcommand{\etaJ}{\eta_{\mathrm{J},K}^n}
\newcommand{\etaOscT}{\eta_{\mathrm{osc},\tau}}
\newcommand{\etaOscS}{\eta_{\mathrm{osc},h,K}^{n}}

\newcommand{\etaOsca}{\eta_{\mathrm{osc}}^{\ta,n}}

\newcommand{\etaCJ}{\eta_{\mathrm{C}}^{n}}

\newcommand{\etaOscTn}{\eta_{\mathrm{osc},\tau}^n}
\newcommand{\etaOscInit}{\eta_{\mathrm{osc},\mathrm{init}}}
\newcommand{\nEy}{\calE_Y}
\newcommand{\NEy}{\norm{\cdot}_{\nEy}}
\newcommand{\Ey}{\norm{u-u_{\th}}_{\nEy}}
\newcommand{\nEya}{\calE_Y^{\ta,n}}

\newcommand{\Eya}{\abs{u-u_{\th}}_{\nEya}}


\title{Guaranteed, locally space-time efficient, and polynomial-degree robust a posteriori error estimates for high-order discretizations of parabolic problems
\thanks{This project has received funding from the European Research Council (ERC) under the European Union's Horizon 2020 research and innovation program (grant agreement No 647134 GATIPOR).}}
\author{Alexandre~Ern\footnotemark[2] \and Iain~Smears\footnotemark[3] \and Martin~Vohral\'ik\footnotemark[3]}

\begin{document}

\numberwithin{equation}{section}

\renewcommand{\thefootnote}{\fnsymbol{footnote}}
\footnotetext[2]{Universit\'e Paris-Est, CERMICS (ENPC), 77455 Marne-la-Vall\'{e}e cedex 2, France (alexandre.ern@enpc.fr).}
\footnotetext[3]{INRIA Paris, 2 Rue Simone Iff, 75012 Paris, France (iain.smears@inria.fr, martin.vohralik@inria.fr)}
\renewcommand{\thefootnote}{\arabic{footnote}}

\maketitle

\begin{abstract}
We consider the a posteriori error analysis of approximations of parabolic problems based on arbitrarily high-order conforming Galerkin spatial discretizations and arbitrarily high-order discontinuous Galerkin temporal discretizations. Using equilibrated flux reconstructions, we present a posteriori error estimates for a norm composed of the $L^2(H^1)\cap H^1(H^{-1})$-norm of the error and the temporal jumps of the numerical solution. The estimators provide guaranteed upper bounds for this norm, without unknown constants. Furthermore, the efficiency of the estimators with respect to this norm is local in both space and time, with constants that are robust with respect to the mesh-size, time-step size, and the spatial and temporal polynomial degrees. We further show that this norm, which is key for local space-time efficiency, is globally equivalent to the $L^2(H^1)\cap H^1(H^{-1})$-norm of the error, with polynomial-degree robust constants.
The proposed estimators also have the practical advantage of allowing for very general refinement and coarsening between the timesteps.
\end{abstract}

{\noindent\bfseries Key words: }
Parabolic partial differential equations, a posteriori error estimates, local space-time efficiency, polynomial-degree robustness, high-order methods
\smallskip 

{\noindent\bfseries AMS subject classifications: }
65M15, 65M60

\section{Introduction}\label{sec:introduction}%
We consider the heat equation
\begin{equation}\label{eq:parabolic}
\begin{aligned}
\p_t u - \Delta u = f & & & \text{in }\Om\times(0,T),\\
 u = 0 & & &\text{on }\DO\times (0,T),\\
 u(0) = u_0 & & &\text{in }\Om,
\end{aligned}
\end{equation}
where $\Om \subset \R^d $, $1\leq d \leq 3$, is a bounded, connected, polyhedral open set with Lipschitz boundary, and $T>0$ is the final time. We assume that $f \in L^2(0,T;L^2(\Om))$, and that $u_0\in L^2(\Om)$. We are interested here in developing a posteriori error estimates for a class of high-order discretizations of~\eqref{eq:parabolic}. In particular, we consider a conforming finite element method (FEM) in space on unstructured shape-regular meshes, and a discontinuous Galerkin discretization in time, where one is free to vary the approximation orders $p$ in space and $q$ in time, as well as the mesh size $h$ and time-step size~$\tau$, leading to what we call a $hp$-$\tau q$ method. These methods are highly attractive from the point of view of flexibility, accuracy, and computational efficiency, since it is known from a priori analysis that judicious local adaptation of the discretization parameters can lead to exponential convergence rates with respect to the number of degrees of freedom, even for solutions with singularities near domain corners, edges, and at initial times~\cite{Schotzau2000,Schwab1998,WerderGerdesSchotzauSchwab2001}. In practice, it is desirable to determine the adaptation algorithmically, which requires rigorous and high-quality a posteriori error control in order to exploit the potential for high accuracy and efficiency of $hp$-$\tau q $ discretizations.
We recall that a posteriori error estimates should ideally give \emph{guaranteed} upper bounds on the error, i.e.\ without unknown constants, should be \emph{locally efficient}, meaning that the local estimators should be bounded from above by the error measured in a local neighbourhood, and, moreover, should be \emph{robust}, with all constants in the bounds being independent of the discretization parameters; we refer the reader to~\cite{Verfurth2013} for an introduction to these concepts.

In the context of parabolic problems, the a posteriori error analysis for low- and fixed-order methods has received significant attention over the past decade, with efforts mostly concentrated on fixed-order FEM in space coupled with an implicit Euler or Crank--Nicolson time-stepping scheme, leading to estimates for a wide range of norms. These include estimates for the $L^2(H^1)$-norm of the error considered independently by Picasso and Verf\"urth~\cite{Picasso1998,Verfurth1998}, with efficiency bounds typically requiring restrictions on the relation between the sizes of the time-steps and the meshes.
Estimates for the $L^2(H^1)\cap H^1(H^{-1})$-norm estimates were first considered by Verf\"urth in~\cite{Verfurth2003}, who crucially proved local-in-time yet global-in-space efficiency of estimators without restrictions between time-step and mesh sizes, see also Bergam, Bernardi, and Mghazli~\cite{BergamMghazli2005}. Guaranteed upper bounds were later obtained by Ern and Vohral\'ik in \cite{ErnVohralik2010}, with similar efficiency results as in \cite{Verfurth2003}.
There are also upper bounds in $L^2(L^2)$, $L^\infty(L^2)$ and $L^\infty(L^\infty)$ and higher order norms, based on either duality techniques as in Eriksson and Johnson~\cite{Eriksson1995} or the elliptic reconstruction technique originally due to Makridakis and Nochetto~\cite{Makridakis2003} and later considered in the fully discrete context by Lakkis and Makridakis~\cite{LakkisMakridakis2006}, see also \cite{LakkisMakridakisPryer2015} and the references therein. Repin~\cite{Repin2002} studied so-called functional estimates.
Finally, a posteriori error estimates developed in the context of the heat equation often serve as a starting point for extensions to diverse applications, including nonlinear problems and spatially non-conforming methods among others~\cite{DiPietroVohralikSoleiman2015,DolejsiErnVohralik2013,GeorgoulisLakkisVirtanen2011,Kreuzer2013,NicaiseSoualem2005}.
Adaptive algorithms for parabolic problems are studied in \cite{ChenFeng2004,GaspozKreuzerSiebertZiegler2016,KreuzerMollerSchmidtSiebert2012}.

It is apparent from the literature that, even for low- and fixed-order methods, there are remaining outstanding issues, particularly in terms of the efficiency of the estimators. The efficiency of the estimators is significantly influenced by the choice of norm to be estimated, with the strongest available results being attained by $Y$-norm estimates, where henceforth $Y\coloneqq L^2(H^1_0)\cap H^1(H^{-1})$. However, as mentioned above, even in this norm, the full space-time local efficiency of the estimators is not known. It is helpful to examine here more closely the issue of spatial locality of estimates in order to motivate the approach adopted in this work.
For example, let us momentarily consider an implicit Euler discretization in time and a conforming FEM in space, recalling that the implicit Euler method corresponds to the lowest-order discontinuous Galerkin time-stepping method, which uses piecewise constant approximations with respect to time. Since the resulting numerical solution $u_{\th}$ is discontinuous with respect to time, and thus $u_{\th}\notin Y$, it is usual to consider a reconstruction, denoted by $\Uth \in Y$, obtained by piecewise linear interpolation at the time-step nodes, and it is seemingly natural to seek a posteriori error estimates for $\norm{u-\Uth}_{Y}$, where $\norm{\cdot}_Y$ is defined in \eqref{eq:XYnorms} below, and where $u$ is the solution of~\eqref{eq:parabolic}; for example, this corresponds to the approach adopted in~\cite{Verfurth2003}.

The main issue encountered in studying the efficiency of estimators with respect to the error measured by $\norm{u-\Uth}_{Y}$ is that $\Uth$ fails to satisfy the Galerkin orthogonality property on the discrete level: instead, $\Uth$ satisfies
\begin{equation}\label{eq:discrete_residual}
\int_{I_n} (f,v_\th) - (\p_t\Uth,v_{\th}) - (\nabla \Uth,\nabla v_{\th}) \dd t  = \int_{I_n} (\nabla (u_{\th}-\Uth),\nabla v_{\th})\dd t,
\end{equation}
for all discrete test functions $v_{\th}$ that are constant in time over the given time interval $I_n$ and belong to the associated finite element space $\Vn$ (see section~\ref{sec:fem_approximation} for complete definitions). It is seen from the right-hand side of~\eqref{eq:discrete_residual} that a discrete residual arises from the difference between the numerical solution $u_{\th}$ and its reconstruction $\Uth$.
Since it does not appear possible to show that the discrete residual in~\eqref{eq:discrete_residual} is controlled locally in space and time by the corresponding local space-time components of the $Y$-norm of the error, one cannot obtain local efficiency of the estimators with respect to this norm; we note that this issue remains essentially independent of the specific construction of the a posteriori error estimators, whether they are residual-type estimators as in~\cite{Verfurth2003} or equilibrated flux estimators as we consider here. Nevertheless, Verf\"urth~\cite{Verfurth2003} showed in the lowest-order case that the global-in-space local-in-time norm of the discrete residual can be bounded by the corresponding global-in-space local-in-time $Y$-norm of $u-\Uth$, which leads to time-local yet space-global efficiency of the estimators.
In order to overcome the issue of the loss of spatial locality, we observe that the discrete residual in \eqref{eq:discrete_residual} is equivalent to the temporal jumps in the numerical solution $u_{\th}$, which is an error component in itself since it measures the lack of conformity in $Y$ of the numerical solution $u_{\th}$.
It is therefore natural to consider a composite norm $\Ey$ that includes both $\norm{u-\Uth}_Y$ and the norm of jumps of the numerical solution $u_{\th}$. As we explain below, we then recover the fully space-time local efficiency of our estimators with respect to $\Ey$: see \eqref{eq:intro_lower} below, and see Theorem~\ref{thm:Y_norm_guaranteed_efficiency} of section~\ref{sec:Y_aposteriori}.

For $hp$-FEM discretizations, one of the key issues concerns the robustness of the estimators with respect to the polynomial degree; this issue appears already in the context of elliptic problems, where Melenk and Wohlmuth \cite{MelenkWolhmuth2001} and Melenk~\cite{Melenk2005} showed that the well-known residual estimators fail to be polynomial-degree robust.
In a breakthrough work, Braess, Pillwein,~and~Sch\"oberl~\cite{Braess2009} established the polynomial-degree robustness of estimators based on equilibrated fluxes, in the context of elliptic diffusion problems.
These estimators are based on a globally $\bm{H}(\Div)$-conforming flux constructed from mixed finite element approximations of local Neumann problems over vertex-centred patches of the mesh.
The polynomial degree robustness of these estimators was then recently generalized to nonconforming and mixed methods for elliptic problems in \cite{ErnVohralik2015}, to which we refer the reader for further references on the literature of equilibrated flux estimators for elliptic problems. In the context of parabolic problems, we must also address the additional question of robustness of the estimators with respect to the temporal polynomial degrees.
In comparison to low- and fixed-order methods, there are comparatively few works on a posteriori error estimates for high-order discretizations of parabolic problems.  Building on the earlier work of Makridakis and Nochetto~\cite{Makridakis2006}, Sch\"otzau and Wihler~\cite{Schotzau2010} studied the effect of the temporal approximation order of a posteriori estimates for a composite norm of $L^\infty(L^2)\cap L^2(H^1)$-type, in the context of high-order temporal semi-discretizations of abstract evolution equations by the discontinuous and continuous Galerkin time-stepping methods. Otherwise, it appears that a posteriori error estimates for $hp$-$\tau q$ discretizations of parabolic problems remain essentially untouched.

In this work, we present guaranteed, locally space-time efficient, and polynomial degree robust a posteriori error estimators for $hp$-$\tau q$ discretizations of parabolic problems. This is by no means simple, as it requires the treatment of the challenges that have been outlined above. Our main results are the following.

Let the spaces $Y \coloneqq L^2(H^1_0)\cap H^1(H^{-1})$ and $X\coloneqq L^2(H^1_0)$ be respectively equipped with their standard norms $\norm{\cdot}_Y$ and $\norm{\cdot}_X$ defined in \eqref{eq:XYnorms} below.
Let $Y+\Vt$ be the sum of the continuous and approximate solution spaces, recalling that $\Vt\subset X$ and that $u_{\th}\notin Y$ due to the temporally discontinuous approximation.
Let $\calI \colon Y+\Vt \tends Y$ be the reconstruction operator defined in section~\ref{sec:reconstruction} below, where we note that $\calI v = v$ if and only if $v\in Y$. Let the norm $\norm{\cdot}_{\nEy}$ be defined by $\norm{v}_{\nEy}^2 \coloneqq \norm{\calI v}_{Y}^2+\norm{v-\calI v}_X^2$ for all $v \in Y+\Vt$.

\paragraph{Guaranteed upper bounds}
In Theorem~\ref{thm:Y_norm_guaranteed_efficiency} of section~\ref{sec:Y_aposteriori}, we show a posteriori estimates in the norm $\norm{\cdot}_{\nEy}$. In particular, we have $\calI u = u$, so $\Ey^2=\norm{u-\Uth}_Y^2+\norm{u_{\th}-\Uth}_{X}^2$, where we note that $\norm{u_{\th}-\Uth}_{X}$ is a measure of the temporal jumps of the numerical solution~$u_{\th}$. In the absence of data oscillation, our bound takes the simple form
\begin{equation}\label{eq:intro_upper}
\Ey^2   \leq \sum_{n=1}^N \sum_{K\in\calT^n} \left\{ \int_{I_n} \norm{\sth + \nabla \Uth}_K^2 + \norm{\nabla(u_{\th}-\Uth)}_K^2 \,\dd t \right\},
\end{equation}
where $\sth$ is the $\bm{H}(\Div)$-conforming reconstruction; see sections~\ref{sec:fem_approximation} and~\ref{sec:flux_equilibration} for full definitions of the notation and construction of the estimators.

\paragraph{Polynomial-degree robustness and local space-time efficiency}
We establish local space-time efficiency of our estimators with polynomial-degree robust constants, expressed by the lower bound
\begin{equation}\label{eq:intro_lower}
 \int_{I_n} \norm{\sth + \nabla \Uth}_K^2 + \norm{\nabla(u_{\th}-\Uth)}_K^2 \dd t \lesssim \sum_{\ta\in\calV_{K}} \Eya^2  + \text{oscillation},
\end{equation}
where $K$ is an element of the mesh $\calT^n$ for time-step $I_n$, where $\calV_{K}$ denotes the set of vertices of $K$, and where $\Eya$ is the local component of $\Ey$ on the patch associated with the vertex $\ta$. Here, and in the following, the notation $a\lesssim b$ means that $a\leq C b$, with a constant $C$ that depends possibly on the shape-regularity of the spatial meshes, but is otherwise independent of the mesh-size, time-step size, as well as the spatial and temporal polynomial degrees. We stress that this efficiency bound does not require any relation between the sizes of the time-step and the mesh. The full bound is stated in Theorem~\ref{thm:Y_norm_guaranteed_efficiency} below.

In addition to the above results, the estimators proposed here are advantageous in terms of flexibility, since they do not require restrictions on coarsening or refinement between time-steps that appeared in earlier works, such as the transition condition used in \cite[p.~196, 201]{Verfurth2003}. The main tool to avoid this condition is Lemma~\ref{lem:p_robust_lifting} below.

\paragraph{Relation between $\Ey$ and $\norm{u-\Uth}_Y$}
In Theorem~\ref{thm:Y_norm_relation}, we simplify and generalize to higher-order temporal discretizations a key result of Verf\"urth~\cite{Verfurth2003}, namely that the jumps in the numerical solution can be controlled locally-in-time and globally-in-space by the $Y$-norm of $u-\Uth$. Specifically, for arbitrary approximation orders and for any time-step interval $I_n$, we show that
\[
\int_{I_n} \norm{\nabla(u_{\th}-\Uth)}^2\,\dd t \lesssim \int_{I_n} \norm{\p_t(u-\Uth)}_{H^{-1}(\Om)}^2 + \norm{\nabla (u-\Uth)}^2\,\dd t + \text{oscillation},
\]
where the constant, which is in fact known explicitly, is independent of all other quantities, including the temporal polynomial degree. The associated oscillation term involves the minimum of the source term data oscillation and the coarsening error, and thus can be controlled in practice. In the absence of this oscillation, we have
\begin{equation}\label{eq:intro_error_measure_equivalence}
\norm{u-\Uth}_{Y}\leq \Ey \leq 3 \norm{u-\Uth}_{Y},
\end{equation}
in addition to the upper and lower bounds~\eqref{eq:intro_upper}~and~\eqref{eq:intro_lower}. The key implication is that $\Ey$ and $\norm{u-\Uth}_{Y}$ are globally equivalent, although their local distributions may differ. We also stress that the equivalence is independent of the polynomial degrees. We offer some more refined equivalence results in section~\ref{sec:Y_norm_relation} in order to treat the case of possibly non-vanishing oscillation terms.

This paper is organized as follows. First, in section~\ref{sec:infsup} we introduce a functional setting for the a posteriori error analysis. We find it worthwhile to provide a complete derivation of the inf-sup analysis of the problem, as we give here quantitatively sharp results that are advantageous for the efficiency of the estimators in practice. Section~\ref{sec:fem_approximation} defines the setting in terms of notation, finite element approximation spaces, and the numerical scheme. Then, in section~\ref{sec:flux_equilibration}, we define the equilibrated flux reconstruction used in the a posteriori error estimates.
In section~\ref{sec:Y_aposteriori} we gather our main results underlying \eqref{eq:intro_upper}, \eqref{eq:intro_lower} and \eqref{eq:intro_error_measure_equivalence}. The proofs of the main results are treated in the subsequent sections: section~\ref{sec:Y_norm_relation} establishes the relation between $\Ey$ and $\norm{u-\Uth}_Y$; the proof of the guaranteed upper bound is given in section~\ref{sec:proof_upper}; and the efficiency of the estimators is the subject of section~\ref{sec:efficiency_Y_norm}.

\section{Inf-sup theory}\label{sec:infsup}
Recall that $\Om \subset \R^d $, $1\leq d \leq 3$ is a bounded, connected, polyhedral open set with Lipschitz boundary. For an arbitrary open subset $\omega\subset \Om$, we use $(\cdot,\cdot)_{\om}$ to denote the $L^2$-inner product for scalar- or vector-valued functions on $\omega$, with associated norm $\norm{\cdot}_{\om}$. In the special case where $\om = \Om$, we drop the subscript notation, i.e.\ $\norm{\cdot}\coloneqq\norm{\cdot}_{\Om}$.
We consider the function spaces $X \coloneqq \LH$ and $Y \coloneqq \LH \cap \HHm$,
which we equip with the following norms:
\begin{equation}\label{eq:XYnorms}
\begin{aligned}
\norm{\vphi}_{Y}^2 & \coloneqq \int_0^T \norm{\p_t \vphi}_{H^{-1}(\Om)}^2 + \norm{\nabla \vphi}^2 \,\dd t + \norm{\vphi(T)}^2 & & \forall\, \vphi \in Y,\\
\norm{v}_X^2 & \coloneqq \int_0^T \norm{\nabla v}^2 \,\dd t & & \forall\, v \in X.
\end{aligned}
\end{equation}
Define the bilinear form $\By \colon Y\times X\tends \R$ by
\begin{equation}
\By(\vphi,v) \coloneqq \int_0^T \pair{\p_t \vphi}{v} + (\nabla \vphi,\nabla v) \,\dd t,
\end{equation}
where $\vphi\in Y$ and $v\in X$ are arbitrary functions, and $\pair{\cdot}{\cdot}$ denotes here the duality pairing between $H^{-1}(\Om)$ and $H^1_0(\Om)$.
Then, the problem~\eqref{eq:parabolic} admits the following weak formulation: find $u \in Y$ such that $u(0)=u_0$ and such that
\begin{equation}\label{eq:Y_formulation}
\begin{aligned}
\By(u,v) = \int_0^T (f,v)\,\dd t & && \forall\, v \in X.
\end{aligned}
\end{equation}
The well-posedness of~\eqref{eq:Y_formulation} is well-known and can be shown by Galerkin's method \cite{Evans1998,Wloka1987}.
The following result states an inf--sup stability result for the bilinear form~$\By$ for the above spaces equipped with their respective norms. The inf--sup stability result presented here has the interesting and important property of taking the form of an identity, which is advantageous for the sharpness of a~posteriori error analysis.
\begin{theorem}[Inf--sup identity]\label{thm:inf_sup_parabolic}
For every $\vphi \in Y$, we have
\begin{equation}
\begin{aligned}
\norm{\vphi}_{Y}^2 &= \left[\sup_{v \in X\setminus\{0\}} \frac{ \By(\vphi,v) }{\norm{v}_{X}}\right]^2 + \norm{\vphi(0)}^2 . \label{eq:infsup_continuous_Y}
\end{aligned}
\end{equation}
\end{theorem}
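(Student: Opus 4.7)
}

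The strategy is to identify the supremum on the right-hand side with the $X$-norm of a Riesz representative, and then to compute this norm explicitly using the characterization of $H^{-1}$ via the Laplacian isomorphism together with the standard integration-by-parts identity in $Y$. First I would observe that $X = \LH$ is a Hilbert space with inner product $(u,v)_X \coloneqq \int_0^T (\nabla u,\nabla v)\,\dd t$, and that for fixed $\vphi \in Y$ the map $v \mapsto \By(\vphi,v)$ is a bounded linear functional on $X$. By the Riesz representation theorem, there exists a unique $w \in X$ such that $\By(\vphi,v) = (w,v)_X$ for all $v \in X$, and the supremum in \eqref{eq:infsup_continuous_Y} equals $\norm{w}_X$.

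Next I would identify $w$ pointwise in time. Since test functions of the form $v(x,t)=\chi(t)z(x)$ with $\chi \in C_c^\infty(0,T)$ and $z \in H^1_0(\Om)$ are dense in $X$, the relation $\By(\vphi,v)=(w,v)_X$ is equivalent to the pointwise-in-$t$ identity
\begin{equation*}
(\nabla w(t),\nabla z) = \pair{\p_t \vphi(t)}{z} + (\nabla \vphi(t),\nabla z) \qquad \forall\, z \in H^1_0(\Om),
\end{equation*}
for a.e.\ $t \in (0,T)$. In other words, $w(t) = R\bigl(\p_t\vphi(t)-\Delta \vphi(t)\bigr)$, where $R\colon H^{-1}(\Om)\to H^1_0(\Om)$ denotes the Riesz isomorphism $\pair{F}{z} = (\nabla RF,\nabla z)$, noting that $R(-\Delta \vphi) = \vphi$. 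Consequently
\begin{equation*}
\norm{\nabla w(t)}^2 = \norm{\p_t \vphi(t) - \Delta \vphi(t)}_{H^{-1}(\Om)}^2 = \norm{\p_t\vphi(t)}_{H^{-1}(\Om)}^2 + 2 \pair{\p_t \vphi(t)}{\vphi(t)} + \norm{\nabla \vphi(t)}^2,
\end{equation*}
where the cross term uses $\pair{\p_t\vphi,\Delta\vphi}_{H^{-1}} = \pair{\p_t\vphi}{R\Delta\vphi} = -\pair{\p_t\vphi}{\vphi}$ and the last term uses $\norm{\Delta\vphi}_{H^{-1}}^2 = \norm{\nabla\vphi}^2$.

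Finally, I would integrate this equality over $(0,T)$ and invoke the standard integration-by-parts formula for $Y$, namely $\int_0^T \pair{\p_t\vphi}{\vphi}\,\dd t = \tfrac{1}{2}\bigl(\norm{\vphi(T)}^2 - \norm{\vphi(0)}^2\bigr)$ (valid since $Y \hookrightarrow C([0,T];L^2(\Om))$). This yields
\begin{equation*}
\left[\sup_{v \in X \setminus \{0\}} \frac{\By(\vphi,v)}{\norm{v}_X}\right]^2 = \norm{w}_X^2 = \int_0^T \bigl(\norm{\p_t\vphi}_{H^{-1}(\Om)}^2 + \norm{\nabla\vphi}^2\bigr)\,\dd t + \norm{\vphi(T)}^2 - \norm{\vphi(0)}^2,
\end{equation*}
which is exactly $\norm{\vphi}_Y^2 - \norm{\vphi(0)}^2$. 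Rearranging gives \eqref{eq:infsup_continuous_Y}. The only mildly delicate point is bookkeeping the signs in the identification $R\Delta\vphi=-\vphi$; everything else is a direct calculation, with no genuine obstacle beyond ensuring the regularity needed to apply the integration-by-parts formula.
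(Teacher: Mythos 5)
Your proposal is correct and follows essentially the same route as the paper: the paper defines $w_*$ by $(\nabla w_*,\nabla v)=\pair{\p_t\vphi}{v}$ so that the supremum equals $\norm{w_*+\vphi}_X$, which is exactly your Riesz representative $w=R(\p_t\vphi-\Delta\vphi)=w_*+\vphi$, and then expands the square and applies the same integration-by-parts identity. The only difference is cosmetic phrasing (you invoke the Riesz representation theorem on $X$ explicitly and localize in time via tensor-product test functions, while the paper works directly with the pointwise-in-time elliptic problem), so there is nothing substantive to add.
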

\begin{proof}
For a fixed $\vphi \in Y$, let $w_*\in X$ be defined by $(\nabla w_*, \nabla v ) = \pair{\p_t \vphi}{v}$ for all $v\in H^1_0(\Om)$, a.e.\ in $(0,T)$, which implies the identity $\norm{\nabla w_*}^2 = \norm{\p_t \vphi}_{H^{-1}(\Om)}^2$ a.e. in~$(0,T)$. Furthermore, we have $\calB_Y(\vphi,v)=\int_0^T(\nabla(w_*+\vphi),\nabla v)\,\dd t$, thus implying that $\sup_{v\in X\setminus\{0\}} \By(\vphi,v)/\norm{v}_X = \norm{w_*+\vphi}_X$.
Thus, we obtain the desired identity~\eqref{eq:infsup_continuous_Y} by expanding the square
\begin{equation}\label{eq:lem_Y_norm_equivalence_2}
\begin{split}
\left[\sup_{v \in X\setminus\{0\}} \frac{ \By(\vphi,v) }{\norm{v}_{X}}\right]^2
&= \int_0^T \norm{\nabla( w_* + \vphi )}^2 \,\dd t\\
& = \int_0^T \norm{\nabla w_*}^2 + 2(\nabla w_*,\nabla \vphi) + \norm{\nabla \vphi}^2 \,\dd t \\
& = \int_0^T \norm{\p_t \vphi}_{H^{-1}(\Om)}^2 + 2\pair{\p_t \vphi}{\vphi} + \norm{\nabla \vphi}^2 \,\dd t \\
&= \norm{\vphi}_Y^2 - \norm{\vphi(0)}^2,
\end{split}
\end{equation}
where we note that we have used the identity $\int_{0}^T 2 \pair{\p_t \vphi}{\vphi}\,\dd t = \norm{\vphi(T)}^2 - \norm{\vphi(0)}^2$.
\end{proof}

In order to estimate the error between the solution $u$ of~\eqref{eq:parabolic} and its approximation, we define the residual functional~$\calR_Y \colon Y\tends X^\prime$ by
\begin{equation}\label{eq:R_Y_def}
\pair{\calR_Y(\vphi)}{v} \coloneqq \calB_Y(u-\vphi,v)= \int_0^T (f,v) - \pair{\p_t \vphi}{v} - (\nabla \vphi,\nabla v) \,\dd t,
\end{equation}
where $v\in X$ and $\vphi \in Y$, and $\pair{\cdot}{\cdot}$ denotes here the duality pairing between the dual space $X^\prime$ and $X$.
The dual norm of the residuals is naturally defined by $\norm{\calR_Y(\vphi)}_{X^\prime}\coloneqq \sup_{v\in X\setminus\{0\}} \tfrac{\pair{\calR_Y(\vphi)}{v}}{\norm{v}_X}$.
Theorem~\ref{thm:inf_sup_parabolic} implies the following equivalence between the error and dual norm of the residual: for all $\vphi \in Y$, we have
\begin{equation}
\norm{u-\vphi}_{Y}^2 = \norm{\calR_Y(\vphi)}_{X^\prime}^2 + \norm{u_0-\vphi(0) }^2. \label{eq:Y_error_residual_equivalence}
\end{equation}

\section{Finite element approximation}\label{sec:fem_approximation}

Consider a partition of the interval~$(0,T)$ into time-step intervals $I_n\coloneqq (t_{n-1},t_n) $, with $1\leq n \leq N$, where it is assumed that $[0,T]=\bigcup_{n=1}^N\overline{I_n}$, and that $\{t_n\}_{n=0}^N$ is strictly increasing with $t_0=0$ and  $t_N = T$.
For each interval $I_n $, we let $\tau_n \coloneqq t_n-t_{n-1}$ denote the local time-step size. We will not need any special assumptions about the relative sizes of the time-steps to each other. We associate a temporal polynomial degree $q_n\geq 0$ to each time-step $I_n$, and we gather all the polynomial degrees in the vector $\bm q = (q_n)_{n=1}^N$.
For a general vector space $V$, we shall  write $\calQ_{q_n}\left(I_n;V\right)$ to denote the space of $V$-valued univariate polynomials of degree at most $q_n$ over the time-step interval $I_n$.

\subsection{Meshes}
We associate a matching simplicial mesh $\calT^n$ of the domain $\Om$ for each $0\leq n \leq N$, where we assume shape-regularity of the meshes uniformly over all time-steps. This allows us to treat many applications where the meshes are obtained by refinement or coarsening between time-steps. We consider here only matching simplicial meshes for simplicity, although we indicate that mixed simplicial--parallelepipedal meshes, possibly containing hanging nodes, can be also be treated: see \cite{DolejsiErnVohralik2016} for instance.
The mesh $\calT^0$ will be used to approximate the initial datum $u_0$.
For each element~$K\in\calT^n$, let $h_K\coloneqq \diam K$ denote the diameter of $K$. We associate a local spatial polynomial degree $p_K\geq 1$ to each $K\in \calT^n$, and we gather all spatial polynomial degrees in the vector $\bm p_n= (p_K)_{K\in\calT^n}$. In order to keep our notation sufficiently simple, the dependence of the local spatial polynomial degrees $p_K$ on the time-step is kept implicit, although we bear in mind that the polynomial degrees may change between time-steps.

\subsection{Approximation spaces}
For a general matching simplicial mesh $\calT$ with associated vector of polynomial degrees $\bm p=(p_K)_{K\in\calT}$, $p_K\geq 1$ for all $K\in\calT$, the $H^1_0(\Om)$-conforming $hp$-finite element space $\Vh(\calT,\bm p)$ is defined by
\begin{equation}\label{eq:conforming_space_def}
\Vh (\calT,\bm p)\coloneqq \left\{v_h \in H^1_0(\Om),\; \eval{v_h}{K} \in \calP_{p_K}(K)\quad\forall\,K\in\calT\right\},
\end{equation}
where $\calP_{p_K}(K)$ denotes the space of polynomials of total degree at most $p_K$ on $K$.
For shorthand, we denote $\Vn \coloneqq V_h(\calT^n,\bm{p}_n)$ for each $0\leq n \leq N$. Let $\Pi_{h} u_0 \in V_h^0$ denote an approximation to the initial datum $u_0$, a typical choice being the $L^2$-orthogonal projection onto $V^0_h$.
Given the collection of timesteps $\{I_n\}_{n=1}^N$, the vector $\bm q$ of temporal polynomial degrees, and the $hp$-finite element spaces $\{\Vn\}_{n=1}^N$, the spatio-temporal finite element space $\Vt$ is defined by
\begin{equation}\label{eq:space_time_fem}
\Vt \coloneqq \left\{ v_{\th}|_{(0,T)}\in X,\; \eval{v_{\th}}{I_n} \in \calQ_{q_n}(I_n;\Vn) \quad\forall\, n=1,\dots,N,\; v_{\th}(0)\in V_h^0 \right\}.
\end{equation}
Functions in $\Vt$ are generally discontinuous with respect to the time-variable at the partition points, although we take them to be left-continuous: for all $1\leq n \leq N$, we define $v_{\th}(t_n)$ as the trace at $t_n$ of the restriction $\eval{v_{\th}}{I_n}$. Functions in $\Vt$ are thus left-continuous; moreover they also have a well-defined value at $t_0=0$.
For all $0\leq n < N$, we denote the right-limit of $v_{\th}\in\Vt$ at $t_n$ by $v_{\th}(t_n^+)$.
Then, the temporal jump operators $\lld \cdot \rrd_n$, $0\leq n \leq N-1$, are defined on $\Vt$ by
\begin{equation}\label{eq:jump_operators}
\lld v_{\th} \rrd_n \coloneqq v_{\th}(t_n)-v_{\th}(t_n^+), \quad 0\leq n \leq N-1.
\end{equation}

\subsection{Refinement and coarsening}
Similary to other works, e.g., \cite[p.~196]{Verfurth2003}, we assume that we have at our disposal a common refinement mesh $\CR$ of $\calT^{n-1}$ and $\calT^n$ for each $1\leq n \leq N$, as well as associated polynomial degrees $\widetilde{\bm{p}}_n=(p_{\elCR})_{\elCR\in\CR}$, such that $\Vnm + \Vn \subset \VCR \coloneqq  \Vh(\CR,\widetilde{\bm{p}}_n) $.
For a function $v_{\th}\in \Vt$, we observe that $\lld v_{\th} \rrd_{n-1} \in \VCR$ for each $1\leq n \leq N$ since $v_{\th}(t_{n-1})\in \Vnm$, $v_{\th}(t_{n-1}^+)\in \Vn$, and $\Vnm + \Vn \subset \VCR$.
It is assumed that $\CR$ has the same shape-regularity as $\calT^{n-1}$ and $\calT^n$, and that every element $\elCR\in\CR$ is wholly contained in a single element $K^\prime\in\calT^{n-1}$ and a single element $K^{\prime\prime}\in\calT^n$. We emphasize that we do not require any assumptions on the relative coarsening or refinement between successive spaces $\Vnm$ and $\Vn$. We note that in the present context, refinement and coarsening can be obtained by modification of the meshes as well as change in the polynomial degrees. Concerning the polynomial degrees, we may choose for example $p_{\elCR} = \max(p_{K^\prime},p_{K^{\prime\prime}})$.
In the case where $\Vn$ is obtained from $\Vnm$ by refinement without coarsening, then we may choose $\CR \coloneqq \calT^n$ and $\widetilde{\bm{p}}_n \coloneqq \bm{p}_n$ so that $\VCR=\Vn$. However, we do not need the transition condition assumption from~\cite[p.~196, 201]{Verfurth2003}, which requires a uniform bound on the ratio of element sizes between $\CR$ and $\calT^n$.

\subsection{Numerical scheme}
The numerical scheme for approximating the solution of the parabolic problem~\eqref{eq:parabolic} consists of finding  $u_{\th} \in \Vt $ such that $u_{\th}(0)=\Pi_h u_0$, and, for each time-step interval $I_n$,
\begin{multline}\label{eq:num_scheme}
\int_{I_n} (\p_t u_{\th}, v_{\th} ) + (\nabla u_{\th}, \nabla v_{\th}) \,\dd t - \left(\lld u_{\th} \rrd_{n-1}, v_{\th}(t_{n-1}^+) \right) \\
= \int_{I_n} (f,v_{\th}) \,\dd t \qquad  \forall\,v_\th \in \calQ_{q_n}(I_n;\Vn).
\end{multline}
Here the time derivative $\p_t u_{\th}$ is understood as the piecewise time-derivative on each time-step interval $I_n$.
The numerical solution $u_{\th}\in \Vt$ can thus be obtained by solving the fully discrete problem~\eqref{eq:num_scheme} on each successive time-step. At each time-step, this requires solving a linear system that is symmetric only in the lowest-order case; this can be performed efficiently in practice for arbitrary orders, see~\cite{Smears2015a} and the references therein.

\subsection{Reconstruction operator}\label{sec:reconstruction} For each time-step interval $I_n$ and each nonnegative integer $q$, let $L_q^n$ denote the polynomial on $I_n$ obtained by mapping the standard $q$-th Legendre polynomial under an affine transformation of $(-1,1)$ to $I_n$. It follows that $L_q^n(t_n) =1$ for all $q\geq 0$, and $L_q^n(t_{n-1})=(-1)^q$, and that the mapped Legendre polynomials $\{L_q^n\}_{q\geq 0}$ are $L^2$-orthogonal on $I_n$, and satisfy $\int_{I_n}\abs{L_q^n}^2\,\dd t = \frac{\tau_n}{2q+1}$ for all $q\geq 0$.
We introduce the Radau reconstruction operator $\calI$ defined on $\Vt$ by
\begin{equation}\label{eq:def_radau_reconstruction}
\begin{aligned}
\eval{(\calI v_{\th})}{I_n} \coloneqq \eval{ v_{\th} }{I_n} +  \frac{(-1)^{q_n} }{2} \left( L_{q_n}^n - L_{q_n+1}^n \right) \lld v_{\th} \rrd_{n-1} & & & \forall\,v_{\th}\in\Vt.
\end{aligned}
\end{equation}
It is clear that $\calI$ is a linear operator on $\Vt$.
It follows from the properties of the Legendre polynomials that $\eval{\calI  v_{\th} }{I_n}(t_n) = v_{\th}(t_n)$, and that $ \eval{\calI v_{\th} }{I_n}(t_{n-1}^+) = v_{\th}(t_{n-1})$ for all $1\leq n\leq N$. Therefore, $\calI v_{\th}$ is continuous with respect to the temporal variable at the interval partition points $\{t_n\}_{n=0}^{N-1}$, and thus we have
\begin{equation}
\begin{aligned}
\calI v_{\th} \in H^1(0,T;H^1_0(\Om))\subset Y, \quad\eval{ \calI v_{\th} }{I_n} \in   \calQ_{q_n+1}\big( I_n;\VCR\big) & & & \forall \,v_{\th}\in\Vt,
\end{aligned}
\end{equation}
where we recall that $\Vnm + \Vn \subset \VCR$.
We easily deduce the following property of the reconstruction operator $\calI$ from integration-by-parts and the orthogonality of the polynomials $L^n_{q_n}$ and $L^n_{q_n+1}$ to all polynomials of degree strictly less than $q_n$ on the time-step interval $I_n$:
\begin{equation}\label{eq:radau_identity}
\begin{aligned}
\int_{I_n} \p_t \calI  v_{\th}\, \phi \,\dd t = \int_{I_n} \p_t v_{\th}\, \phi \,\dd t - \lld v_{\th} \rrd_{n-1} \phi(t_{n-1}^+) \quad \forall\,\phi \in \calQ_{q_n}(I_n;\R),
\end{aligned}
\end{equation}
where equality holds in the above equation in the sense of functions in $\VCR$.
We may therefore use \eqref{eq:radau_identity} to rewrite the numerical scheme~\eqref{eq:num_scheme} as
\begin{equation}\label{eq:num_scheme_equiv}
\int_{I_n} (\p_t  \Uth,v_{\th}) + (\nabla u_{\th}, \nabla v_{\th})
\,\dd t = \int_{I_n} (f,v_{\th}) \,\dd t \quad  \forall\,v_\th \in \calQ_{q_n}(I_n;\Vn).
\end{equation}
Note also that $\Uth(0)=\Pi_h u_0$.

\begin{remark}[Alternative equivalent definitions]\label{rem:radau_interpolant}
The operator $\calI$ is the Radau reconstruction operator commonly used in the a~posteriori error analysis of the DG time-stepping method {\upshape\cite{Makridakis2006}} and in the a~priori error analysis of time-dependent first-order PDEs {\upshape\cite{ErnSchieweck2016}}. Several equivalent definitions of $\calI$ have appeared in the literature, although it will be particularly advantageous for our purposes to use the definition~\eqref{eq:def_radau_reconstruction} of $\calI$ due to {\upshape\cite{Smears2015a}}, to which we refer the reader for further discussion on the equivalence of the various definitions.
\end{remark}

\begin{remark}[Extensions of $\calI$ to $Y+\Vt$]\label{rem:radau_extension}
In what follows, it will be helpful to extend $\calI$ to a linear operator over $Y+\Vt$. Note that the definition of the jump operators \eqref{eq:jump_operators} can be naturally extended to $Y+\Vt$, and therefore the definition \eqref{eq:def_radau_reconstruction} also extends naturally to $Y+\Vt$. In particular, $\calI\colon Y+\Vt\tends Y$, and we have $\calI\vphi = \vphi$ if and only if $\vphi\in Y$, since the jumps of any~$\vphi\in Y$ vanish identically.
\end{remark}


\section{Construction of the equilibrated flux}\label{sec:flux_equilibration}
The a posteriori error estimates presented in this paper are based on a discrete and locally computable $\bm{H}(\Div)$-conforming flux~$\sth$ that satisfies the key equilibration property
\begin{equation}\label{eq:sigma_th_equilibration}
\begin{aligned}
\p_t \Uth + \nabla{\cdot} \sth  = f_{\th} & &  &\text{in }\Om\times(0,T),
\end{aligned}
\end{equation}
where $\Uth$ is defined in section~\ref{sec:reconstruction}, and $f_{\th}\approx f$ is a data approximation defined in~\eqref{eq:f_discrete_approx} below. We call $\sth$ an equilibrated flux. We consider here the natural extension of existing flux reconstructions for elliptic problems \cite{Braess2009,BraessSchoberl2008,DestuynderMetivet1999,ErnVohralik2015} to the parabolic setting; see also \cite{DolejsiRoskovecVlasak2016}. In particular, for each time-step, $\sth$ is obtained as a sum of fluxes computed by solving local mixed finite element problems over the vertex-based patches of the current mesh, see Definition~\ref{def:flux_construction_1} of section~\ref{sec:flux_reconstruction_def} below.

\subsection{Local mixed finite element spaces}
We now define the mixed finite element spaces that are required for the construction of the equilibrated flux.
For each $1\leq n \leq N$, let $\calVh$ denote the set of vertices of the mesh $\calT^n$, where we distinguish the set of interior vertices $\calVhint$ and the set of  boundary vertices $\calVhext$. For each $\ta \in \calVh$, let $\psia$ denote the hat function associated with $\ta$, and let $\oma$ denote the interior of the support of $\psia$, with associated diameter $h_{\oma}$.
Furthermore, let $\Ta$ denote the restriction of the mesh $\CR$ to~$\oma$.
Recalling that the common refinement spaces $\VCR$ were obtained with a vector of polynomial degrees $\widetilde{\bm{p}}_n = (p_{\elCR})_{\elCR\in \CR}$, we associate to each $\ta \in \calVh$ the fixed polynomial degree
\begin{equation}\label{eq:patch_polynomial_degree}
p_{\ta} \coloneqq \max_{\elCR\in \Ta} (p_{\elCR}+1).
\end{equation}
Observe that $\psia \p_t\Uth|_{\elCR\times I_n}$ is a polynomial function with degree at most $q_n$ in time and at most $ p_{\ta}$ in space for each $\elCR\in \Ta$, $1\leq n \leq N$.

For a polynomial degree $p\geq 0$, let the local spaces $\calP_{p}(\Ta)$ and $\RTN_p(\Ta)$ be defined by
\begin{align*}
\calP_{p}(\Ta) &\coloneqq \{ q_h \in L^2(\oma),\quad q_h|_{\elCR} \in \calP_{p}(\Ta)\quad\forall\,\elCR\in\Ta\}
\\  \RTN_p(\Ta) &\coloneqq \{ \bm{v}_h \in \bm{L}^2(\oma;\R^{\dim}),\quad \bm{v}_h|_{\elCR} \in \RTN_{p}(\elCR)\quad\forall\elCR\in\Ta\},
\end{align*}
where  $\RTN_{p}(\elCR) \coloneqq  \calP_{p}(\elCR;\R^\dim) + \calP_{p}(\elCR)\bm{x}$ denotes the Raviart--Thomas--N\'ed\'elec space of order $p$ on $\elCR$.
It is important to notice that whereas the patch $\oma$ is subordinate to the vertices of the mesh $\calT^n$, the spaces $\calP_{p}(\Ta)$ and $\RTN_p(\Ta)$ are subordinate to the submesh $\Ta$; of course, in the absence of coarsening, this distinction vanishes.

We now introduce the local spatial mixed finite element spaces $\Va$ and $\Qa$, defined by
\begin{align*}
\Va & \coloneqq
\begin{cases}
    \left\{\bm{v}_h \in \Hdivoma\cap \RTNa ,\; \bm{v}_h\cdot \bm{n} =0\text{ on }\p\oma \right\} & \text{if }\ta\in\calVhint,\\
    \left\{\bm{v}_h \in \Hdivoma\cap \RTNa  ,\; \bm{v}_h\cdot \bm{n} =0\text{ on }\p\oma\setminus\DO \right\}& \text{if }\ta\in\calVhext,
\end{cases}
 \\
\Qa & \coloneqq
\begin{cases}
     \left\{ q_h\in \Pa ,\quad (q_h,1)_\oma = 0\right\} & \hspace{4.13cm}\text{if }\ta\in\calVhint,\\
     \; \Pa  & \hspace{4.13cm}\text{if }\ta\in\calVhext.
\end{cases}
\end{align*}
We then define the following space-time mixed finite element spaces
\begin{equation}\label{eq:spacetime_mixed_space_def}
\begin{aligned}
\Vthan \coloneqq \calQ_{q_n}(I_n;\Va), & & & \Qthan \coloneqq \calQ_{q_n}(I_n;\Qa).
\end{aligned}
\end{equation}

\subsection{Data approximation}\label{sec:data_approximation}
Our a posteriori error estimates given in section~\ref{sec:Y_aposteriori} involve certain approximations of the source term $f$ appearing in \eqref{eq:parabolic}. It is helpful to define these approximations here.
First, we define the semi-discrete approximation $f_{\tau}$ of $f$ by $L^2$-orthogonal projection in time. In particular, the approximation $f_{\tau} \in \calQ_{q_n}(I_n;L^2(\Om))$ is defined on each interval $I_n$ by $\int_{I_n}  (f - f_{\tau}, v)\, \dd t = 0$ for all $v\in \calQ_{q_n}(I_n;L^2(\Om))$.
Next, for each $1\leq n \leq N$ and for each $\ta\in\calVh$, let $\Pia$ be the $L^2_{\psia}$-orthogonal projection from $L^2(I_n;L^2_{\psia}(\oma))$ onto $\calQ_{q_n}(I_n;\Pal)$, where $L^2_{\psia}(\oma)$ is the space of measurable functions $v$ on $\oma$ such that $\int_{\oma} \psia \abs{v}^2\,\dd x<\infty$.
In other words, the projection operator $\Pia$ is defined by $\int_{I_n} (\psia \Pia v, q_{\th})_{\oma}\,\dd t = \int_{I_n} (\psia v , q_{\th})_{\oma}\,\dd t$ for all $\, q_{\th} \in \calQ_{q_n}(I_n;\Pal)$.
We adopt the convention that $\Pia v$ is extended by zero from $\oma\times I_n$ to $\Omega\times(0,T)$ for all $v\in L^2(I_n;L^2_{\psia}(\oma))$.
Then, we define $f_{\th}$ by
\begin{equation}\label{eq:f_discrete_approx}
f_{\th} \coloneqq \sum_{n=1}^N\sum_{\ta\in\calVh}\psia \,\Pia f.
\end{equation}

\begin{remark}[Definition of $f_{\th}$]
The somewhat technical appearance of the definition of~$f_{\th}$ is due to the possible variation in polynomial degrees across the mesh and the particular requirements of the analysis of efficiency, in particular the hypotheses of Lemma~\ref{lem:p_robust_lifting} below. Nevertheless, $f_{\th}$ has several important approximation properties. First, for any $1\leq n \leq N$, any $\elCR \in \CR$ and any real-valued polynomial $\phi$ of degree at most $q_n$, we have
\begin{equation}\label{eq:fth_mean_value_zero}
\int_{I_n}(f-f_{\th},1)_{\elCR}\phi\,\dd t = \sum_{\ta\in\calV_K} \int_{I_n}(\psia (f - \Pia f),\phi 1)_{\elCR}\,\dd t = 0,
\end{equation}
where $\calV_K$ denotes the set of vertices of $K$, and where we use the fact that the hat functions $\{\psia\}_{\ta\in\calVh}$ form a partition of unity on $\Omega$. Furthermore, using the orthogonality of the projector $\Pia$ and the fact that $0\leq \psia \leq 1$ in $\Omega$, it is straightforward to show that
\[
\norm{f-f_{\th}}_{L^2(I_n;L^2(\elCR))} \leq \sqrt{d+1} \inf_{w_{\th}\in \calQ_{q_n}(I_n;\calP_{p_{\elCR}}(\elCR))} \,\norm{f- w_{\th}}_{L^2(I_n;L^2(\elCR))},
\]
This shows that $f_{\th}$ defines an approximation of $f$ that is at least of the same order as the one associated with the finite element approximation.
\end{remark}

\subsection{Flux reconstruction}\label{sec:flux_reconstruction_def}
For each $1\leq n \leq N$ and each $\ta\in\calVh$, let the scalar function $\gtautha \in \calQ_{q_n}(I_n;\Pa) $ and vector field $\tautha \in \calQ_{q_n}(I_n;\RTNa) $ be defined by
\begin{subequations}\label{eq:tau_g_def}
\begin{align}
\tautha &\coloneqq - \psia \nabla u_{\th}|_{\oma\times I_n},\label{eq:tau_def} \\
\gtautha &\coloneqq \psia\,\left(\Pia  f  - \p_t \Uth\right)|_{\oma\times I_n} - \nabla \psia \cdot \nabla u_{\th}|_{\oma\times I_n}.\label{eq:g_def}
\end{align}
\end{subequations}
We claim that for all $\ta\in \calVhint$,
\begin{equation}
\label{eq:gthan_mean_value_zero}
\begin{aligned}
(\gtautha(t),1)_{\oma} = 0 & & & \forall\,t\in I_n,
\end{aligned}
\end{equation}
which is equivalent to showing that $\gtautha \in \Qthan$ for all $\ta\in\calVh$.
Indeed, we first observe that the construction of the numerical scheme, in particular identity~\eqref{eq:num_scheme_equiv}, implies that, for any univariate real-valued polynomial $\phi$ of degree at most ${q_n}$ on $I_n$,
\[
\int_{I_n} (\gtautha, \phi 1)_{\oma}\,\dd t =
\int_{I_n}  \big(f, \phi\, \psia\big)_{\oma} -  \big(\p_t \Uth, \phi\,\psia\big)_{\oma} - \big(\nabla u_{\th}, \nabla (\phi \, \psia )\big)_{\oma} \,\dd t =0,
\]
where we have used the orthogonality of the projection~$\Pia$ and the fact that $\phi \psia \in \calQ_{{q_n}}(I_n;\Vn)$ is a valid test function in \eqref{eq:num_scheme_equiv}. Since the function $\gtautha$ is polynomial in time with degree at most $q_n$, i.e.\ $\gtautha \in \calQ_{q_n}(I_n;\Pa)$, we deduce \eqref{eq:gthan_mean_value_zero}.

\begin{definition}\label{def:flux_construction_1}
Let $u_\th\in \Vt$ be the numerical solution of~\eqref{eq:num_scheme}. For each time-step interval $I_n$ and for each vertex $\ta\in\calVh$, let the space-time mixed finite element spaces $\Vthan$ and $\Qthan$ be defined by \eqref{eq:spacetime_mixed_space_def}.
Let $\gtautha$ and $\tautha$ be defined by \eqref{eq:tau_g_def}.
Let $\stha \in \Vthan$ be defined by
\begin{equation}\label{eq:stha_minimization_def}
\stha \coloneqq \argmin_{\substack{ \bm{v}_h \in \Vthan \\ \nabla{\cdot} \bm{v}_h = \gtautha}}\int_{I_n} \norm{\bm{v}_h - \tautha}_{\oma}^2\,\dd t.
\end{equation}
Then, after extending $\stha$ by zero from $\oma\times I_n$ to $\Om \times (0,T)$ for each $\ta \in \calVh$ and for each $1\leq n\leq N$, we define
\begin{equation}\label{eq:flux_reconstruction_1}
\sth\coloneqq \sum_{n=1}^N \sum_{\ta \in \calVh} \stha.
\end{equation}
\end{definition}
Note that $\stha\in \Vthan$ is well-defined for all $\ta\in\calVh$: in particular, for interior vertices $\ta\in\calVhint$, we use~\eqref{eq:gthan_mean_value_zero} to guarantee the compatibility of the datum $\gtautha$ with the constraint $\nabla{\cdot}\stha = \gtautha$.

The following key result shows that $\sth$ from Definition~\ref{def:flux_construction_1} leads to an equilibrated flux.
\begin{theorem}[Equilibration]\label{thm:sigma_th_equilibration}
Let the flux reconstruction~$\sth$ be defined by~\eqref{eq:flux_reconstruction_1} of Definition~{\upshape\ref{def:flux_construction_1}}.
Then $\sth \in L^2(0,T;\Hdiv)$ and we have \eqref{eq:sigma_th_equilibration}, where the discrete approximation $f_{\th}$ is defined in \eqref{eq:f_discrete_approx}.
\end{theorem}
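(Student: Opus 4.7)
The plan is to verify the two assertions separately: first, the global $\bm{H}(\Div,\Om)$-conformity (pointwise in time) and the required time regularity, and second, the divergence identity~\eqref{eq:sigma_th_equilibration}.

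For the conformity, I would fix a time-step $I_n$ and a vertex $\ta \in \calVh$ and use the boundary condition built into the local space $\Va$. For an interior vertex, $\stha \cdot \bm{n} = 0$ on all of $\p\oma$, so extending $\stha$ by zero from $\oma$ to $\Om$ gives a function in $\bm{H}(\Div,\Om)$. For a boundary vertex, $\stha \cdot \bm{n} = 0$ on $\p\oma \setminus \DO$, which is precisely the portion of $\p\oma$ that lies in the interior of $\Om$ after zero extension, so again the zero extension lies in $\bm{H}(\Div,\Om)$. Summing over $\ta$ and $n$ preserves $\bm{H}(\Div,\Om)$-conformity, pointwise in time. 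Since $\stha$ is polynomial (hence smooth) in time on each $I_n$, we obtain $\sth \in L^2(0,T; \bm{H}(\Div,\Om))$.

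For the equilibration, I would compute $\nabla {\cdot} \sth$ on an arbitrary time-step interval $I_n$. Only the vertices $\ta \in \calVh$ contribute there, and for each such vertex $\nabla {\cdot} \stha = \gtautha$ on $\oma \times I_n$ by construction. The zero extension commutes with the divergence thanks to the conformity established above, so
\[
\nabla {\cdot} \sth \big|_{I_n} = \sum_{\ta \in \calVh} \gtautha.
\]
Inserting the definition~\eqref{eq:g_def} of $\gtautha$ and using that $\{\psia\}_{\ta\in\calVh}$ is a partition of unity on $\Om$ (so that $\sum_\ta \psia \equiv 1$ and $\sum_\ta \nabla \psia \equiv 0$), the terms involving $\p_t \Uth$ telescope to $-\p_t \Uth$, the gradient contributions cancel, and we are left with
\[
\sum_{\ta \in \calVh} \gtautha = \sum_{\ta \in \calVh} \psia\,\Pia f \;-\; \p_t \Uth = f_{\th}\big|_{I_n} - \p_t \Uth\big|_{I_n},
\]
in view of the definition~\eqref{eq:f_discrete_approx}. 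This yields~\eqref{eq:sigma_th_equilibration}.

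The only delicate point is the compatibility of the local mixed problem at interior vertices: I would cite~\eqref{eq:gthan_mean_value_zero}, already established just before Definition~\ref{def:flux_construction_1}, which ensures that $\gtautha \in \Qthan$ and thus that the constrained minimization~\eqref{eq:stha_minimization_def} admits a solution. Otherwise the argument is essentially a partition-of-unity bookkeeping calculation; the main conceptual step is recognizing that the boundary conditions in $\Va$ are exactly what is needed so that the zero extensions glue into a globally $\bm{H}(\Div,\Om)$-conforming field whose divergence is the sum of the local divergences.
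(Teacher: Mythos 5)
Your proposal is correct and follows essentially the same route as the paper's proof: $\bm{H}(\Div,\Om)$-conformity of the zero extensions via the boundary conditions in $\Va$, followed by the partition-of-unity computation $\sum_{\ta}\gtautha = f_{\th}-\p_t\Uth$ using $\sum_{\ta}\psia\equiv 1$ and $\sum_{\ta}\nabla\psia\equiv 0$. Your explicit remarks on the compatibility condition~\eqref{eq:gthan_mean_value_zero} and on why the boundary-vertex case still glues correctly are points the paper handles just before and inside Definition~\ref{def:flux_construction_1}, so nothing is missing.
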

\begin{proof}
After extending each $\stha$ by zero from $\oma\times I_n$ to $\Om \times (0,T)$, we have $\stha \in L^2(0,T;\Hdiv)$ as a consequence of the boundary conditions included in the definition of the space $\Va$. This immediately implies that $  \sth \in L^2(0,T;\Hdiv)$.
To show~\eqref{eq:sigma_th_equilibration}, the definition of the flux reconstruction~$\sth$ in \eqref{eq:flux_reconstruction_1} implies that for any time-step interval $I_n $ and any $K\in\calT^n$,
\begin{equation}\label{eq:lem_flux_properties_2}
\begin{split}
\nabla{\cdot} \sth|_{K\times I_n} & = \sum_{\ta\in\calV_K} \nabla{\cdot} \stha|_{K\times I_n} = \sum_{\ta\in\calV_K} \gtautha|_{K\times I_n}
\\ & =\sum_{\ta\in \calV_K} \big( \psia \Pia f -  \psia  \p_t\Uth  - \nabla \psia \cdot \nabla u_{\th}\big)|_{K\times I_n}
\\ &= (f_{\th}- \p_t\Uth)|_{K\times I_n},
\end{split}
\end{equation}
where $\calV_K$ denotes the set of vertices of $K$, where we use the fact that the hat functions $\{\psia\}_{\ta\in\calVh}$ form a partition of unity in order to pass to the last line of \eqref{eq:lem_flux_properties_2}, and where we have used the definition of $f_{\th}$ in \eqref{eq:f_discrete_approx}. This yields \eqref{eq:sigma_th_equilibration} as required.
\qquad\end{proof}

For the purposes of practical implementation, it is easily seen that, for each time-step interval $I_n$, the fluxes $\stha$ can be computed by solving $q_n+1$ independent spatial mixed finite element problems, provided only that an orthogonal or orthonormal polynomial basis is used in time over $I_n$. Moreover, the $q_n+1$ linear systems each share the same matrix, which helps to simplify the implementation and reduce the computational cost.
\begin{lemma}[Decoupling]\label{lem:decoupling}
Let $\stha \in \Vthan$ be defined by~\eqref{eq:stha_minimization_def}. Then $\stha$ is equivalently uniquely defined by: let $(\stha,\rtha)\in \Vthan\times \Qthan$ solve
\begin{subequations}\label{eq:flux_construction_equilibrium}
\begin{align}
&\int_{I_n}(\stha,\bm{v}_{\th})_\oma - (\nabla{\cdot} \bm{v}_{\th}, \rtha)_\oma \,\dd t  = \int_{I_n} (\tautha,\bm{v}_{\th})_\oma \,\dd t & & \forall\, \bm{v}_{\th} \in \Vthan,\\
&\int_{I_n} (\nabla{\cdot} \stha,q_{\th})_\oma \,\dd t = \int_{I_n} (\gtautha,q_{\th})_\oma \,\dd t & & \forall \,q_{\th}\in\Qthan,\label{eq:flux_construction_equilibrium_divergence_constraint}
\end{align}
\end{subequations}
Furthermore, for each $1\leq n \leq N$, let $\{\phi_j^n\}_{j=0}^{q_n}$ be an $L^2(I_n)$-orthonormal basis for the space of univariate real-valued polynomials of degree at most~$q_n$. For each $\ta\in\calVh$, define the functions $\{g^{\ta,n}_{h,j}\}_{j=0}^{q_n}$ and $\{ \tauthj\}_{j=0}^{q_n}$ over the patch $\oma$ by
\begin{equation}\label{eq:decoupling_1}
\begin{aligned}
g^{\ta,n}_{h,j} \coloneqq  \int_{I_n} \gtautha \phi^n_j\,\dd t ,
& & & \tauthj \coloneqq \int_{I_n} \tautha \phi^n_j\,\dd t .
\end{aligned}
\end{equation}
Then, the solution $(\stha,\rtha)$ of
\eqref{eq:flux_construction_equilibrium} can be obtained by solving the following spatial problems: for each $0 \leq j \leq {q_n}$, find $\sthj \in \Va$ and $r ^{\ta,n}_{h,j}$ in $\Qa$ such that
\begin{subequations}\label{eq:decoupling_2}
\begin{align}
&(\sthj,\bm{v}_{h})_\oma - (\nabla{\cdot} \bm{v}_{h}, \rthj)_\oma   =  (\tauthj,\bm{v}_h)_\oma  & & \forall\, \bm{v}_{h} \in \Va,\\
&(\nabla{\cdot} \sthj,q_{h})_\oma  =  (g^{\ta,n}_{h,j},q_{h})_\oma  & & \forall \,q_{h}\in\Qa,
\end{align}
\end{subequations}
and then by defining $ \stha \coloneqq \sum_{j=0}^{q_n} \sthj \phi_j^n$ and $\rtha \coloneqq \sum_{j=0}^{q_n} \rthj \phi_j^n$.
\end{lemma}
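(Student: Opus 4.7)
The plan is to establish two equivalences in turn: first, that the constrained minimization \eqref{eq:stha_minimization_def} is equivalent to the saddle-point system \eqref{eq:flux_construction_equilibrium}; and second, that the space-time system \eqref{eq:flux_construction_equilibrium} decouples into $q_n+1$ purely spatial problems \eqref{eq:decoupling_2} via the $L^2(I_n)$-orthonormal expansion.

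For the first equivalence, I would argue in the standard manner for mixed formulations of constrained minimization. The functional $\bm{v}_{\th}\mapsto \int_{I_n}\norm{\bm{v}_{\th}-\tautha}_{\oma}^2\,\dd t$ is strictly convex and coercive on the affine subspace $\{\bm{v}_{\th}\in\Vthan:\nabla\cdot\bm{v}_{\th}=\gtautha\}$, so a minimizer exists and is unique once this affine subspace is nonempty. Nonemptiness reduces to surjectivity of $\nabla\cdot:\Vthan\to\Qthan$, which follows elementwise from the standard Raviart--Thomas--N\'ed\'elec surjectivity property together with the compatibility condition $(\gtautha(t),1)_\oma=0$ for interior vertices, which is exactly \eqref{eq:gthan_mean_value_zero}, and which is not needed for boundary vertices since no zero-mean constraint is imposed on~$\Qa$ there. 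Writing the Karush--Kuhn--Tucker conditions of the constrained minimization then yields a Lagrange multiplier $\rtha\in\Qthan$ such that $(\stha,\rtha)$ satisfies \eqref{eq:flux_construction_equilibrium}. Conversely, any solution of the saddle-point system \eqref{eq:flux_construction_equilibrium} satisfies the divergence constraint through \eqref{eq:flux_construction_equilibrium_divergence_constraint}, and from the first equation one verifies the stationarity needed to identify $\stha$ with the minimizer.

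For the second equivalence, I would exploit the tensor-product structure $\Vthan=\calQ_{q_n}(I_n;\Va)$ and $\Qthan=\calQ_{q_n}(I_n;\Qa)$. Expand
\begin{equation*}
\stha=\sum_{j=0}^{q_n}\sthj\,\phi_j^n, \qquad \rtha=\sum_{j=0}^{q_n}\rthj\,\phi_j^n,
\end{equation*}
and similarly $\tautha=\sum_{j=0}^{q_n}\tauthj\phi_j^n$, $\gtautha=\sum_{j=0}^{q_n}g_{h,j}^{\ta,n}\phi_j^n$, where the latter coefficients are precisely those given in \eqref{eq:decoupling_1} by $L^2(I_n)$-orthonormality of $\{\phi_j^n\}$. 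Testing the two equations of \eqref{eq:flux_construction_equilibrium} against $\bm{v}_{\th}=\bm{v}_h\phi_k^n$ and $q_{\th}=q_h\phi_k^n$ for arbitrary $\bm{v}_h\in\Va$, $q_h\in\Qa$, and using $\int_{I_n}\phi_j^n\phi_k^n\,\dd t=\delta_{jk}$, yields exactly system \eqref{eq:decoupling_2} for each index $k$. Conversely, if $(\sthj,\rthj)\in\Va\times\Qa$ solves \eqref{eq:decoupling_2} for every $0\leq j\leq q_n$, then multiplying by $\phi_j^n$, summing in $j$, and using orthonormality to recognise the time integrals, one recovers \eqref{eq:flux_construction_equilibrium}. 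Well-posedness of each spatial problem \eqref{eq:decoupling_2} follows from the same RTN inf-sup argument as above, now at the spatial level, with the required compatibility $(g_{h,j}^{\ta,n},1)_\oma=0$ at interior vertices being an immediate consequence of \eqref{eq:gthan_mean_value_zero}.

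There is no serious obstacle: both steps are essentially bookkeeping for standard mixed FEM theory. The only point that requires mild care is to verify that the compatibility condition propagates correctly from $\gtautha$ to each coefficient $g_{h,j}^{\ta,n}$ so that the spatial problems at interior vertices remain solvable, but this is immediate by integrating \eqref{eq:gthan_mean_value_zero} against $\phi_j^n$ in time. The tensor-product decoupling is then a direct consequence of $L^2(I_n)$-orthonormality.
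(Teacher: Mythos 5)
Your proof is correct, and the paper in fact states this lemma without any proof, treating it as standard mixed finite element bookkeeping; your two-step argument --- the classical equivalence of the constrained quadratic minimization with its saddle-point (KKT) system, followed by temporal decoupling through the tensor-product structure $\Vthan=\calQ_{q_n}(I_n;\Va)$, $\Qthan=\calQ_{q_n}(I_n;\Qa)$ and the $L^2(I_n)$-orthonormality of $\{\phi_j^n\}_{j=0}^{q_n}$ --- is exactly the reasoning the authors rely on implicitly. The only imprecision is the word ``elementwise'': the surjectivity of $\nabla{\cdot}\colon\Va\to\Qa$ (equivalently the inf-sup condition guaranteeing existence and uniqueness of the multiplier) is a patch-level property of the $\Hdivoma$-conforming Raviart--Thomas--N\'ed\'elec space with the stated normal-trace boundary conditions, not an element-by-element one, though this does not affect the validity of the argument.
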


\begin{remark}
The analysis in the subsequent sections shows that one particular advantage of the equilibrated flux~$\sth$ of Definition~{\upshape\ref{def:flux_construction_1}} is that it leads to estimators that are robust with respect to coarsening (and refinement) between time-steps. The price to pay is that the size of the linear systems in~\eqref{eq:decoupling_2} grows with the size of coarsening between two successive time-steps, as~\eqref{eq:decoupling_2} are defined on the patches $\oma$ partitioned by the common refinement mesh $\CR$ for each $1\leq n \leq N$. The analysis in~{\upshape\cite[Section~6]{ErnSmearsVohralik2016c}}, though, shows that this computational cost can be significantly reduced to the solution of two low-order systems over the patches $\oma$, followed by local high-order corrections on the sub-patches of $\Ta$. We refer the reader to {\upshape\cite[Section~6]{ErnSmearsVohralik2016c}} for the full details of this approach.
\end{remark}

\section{Main results}\label{sec:Y_aposteriori}
In this section, we present the a posteriori error estimate featuring guaranteed upper bounds, local space-time efficiency, and polynomial-degree robustness.
Let the norm $\norm{\cdot}_{\nEy}\colon Y+\Vt \tends \R_{\geq 0}$ be defined by
\begin{equation}\label{eq:EY_norm_def}
\begin{aligned}
\norm{v}_{\nEy}^2 \coloneqq \norm{\calI v}_Y^2 + \norm{v-\calI v}_X^2  & & & \forall \,v\in Y+\Vt,
\end{aligned}
\end{equation}
where we recall Remark~\ref{rem:radau_extension} on the extension of the linear operator $\calI$ to $Y+\Vt$.
Since the exact solution $u\in Y$ implies that $\calI u = u$, we have the identities
\begin{equation}\label{eq:error_measure_Y}
\begin{split}
\Ey^2 & = \norm{u-\Uth}_{Y}^2 + \norm{ u_{\th}-\Uth}_X^2 \\
 & =  \norm{u-\Uth}_{Y}^2 + \sum_{n=1}^{N} \tfrac{\tau_n ({q_n}+1)}{(2q_n+1)\,(2q_n+3)}\norm{\nabla \lld u_{\th} \rrd_{n-1} }^2,
\end{split}
\end{equation}
where we have simplified $\int_{I_n} \norm{\nabla(u_{\th}-\Uth)}^2\,\dd t=\tfrac{\tau_n ({q_n}+1)}{(2q_n+1)\,(2q_n+3)}\norm{\nabla \lld u_{\th} \rrd_{n-1} }^2$, which is an identity easily deduced from \eqref{eq:def_radau_reconstruction} and from $\int_{I_n}\abs{L_{q}^n}^2\,\dd t=\tfrac{\tau_n}{2q+1}$ for all $q\geq 0$; see also \cite{Schotzau2010}.
We also introduce the localized seminorms $\abs{\cdot}_{\nEya}$, for each $1\leq n \leq N$ and each $\ta\in \calVh$, defined by
\begin{equation}\label{eq:EY_norm_def_localized}
\abs{v}_{\nEya}^2 \coloneqq\int_{I_n}\norm{\p_t\, \calI v}_{H^{-1}(\oma)}^2 + \norm{\nabla\calI v}_{\oma}^2+\norm{\nabla(v-\calI v)}_{\oma}^2\,\dd t \quad \forall\,v\in Y+\Vt.
\end{equation}
Similarly to \eqref{eq:error_measure_Y}, we find that
\begin{multline}\label{eq:error_measure_Y_localized}
\Eya^2 = \int_{I_n} \norm{\p_t(u-\Uth)}_{H^{-1}(\oma)}^2 + \norm{\nabla (u-\Uth)}_{\oma}^2 \,\dd t \\
+ \tfrac{\tau_n ({q_n}+1)}{(2q_n+1)\,(2q_n+3)} \norm{\nabla \lld u_{\th} \rrd_{n-1} }_{\oma}^2.
\end{multline}

Although it might not be immediately obvious that $\Ey$ is equivalent to the Hilbertian sum of the $\Eya$, up to data oscillation, this will come as a consequence of the results shown here and in section~\ref{sec:efficiency_Y_norm}.
We are now ready to state our main results in Theorems~\ref{thm:Y_norm_relation} and~\ref{thm:Y_norm_guaranteed_efficiency} below. It is helpful to denote the time-localized dual norm of the residual by
\begin{equation}\label{eq:time_localized_residual_norm}
\norm{\calR_Y(\Uth)|_{I_n}}_{X^\prime}\coloneqq\sup_{v\in X,\;\norm{v}_X=1}\int_{I_n}(f,v)-\pair{\p_t \Uth}{v}-(\nabla\Uth,\nabla v)\,\dd t.
\end{equation}
Note that $\norm{\calR_Y(\Uth)|_{I_n}}_{X^\prime}$ can always be bounded from above by the restriction of the $Y$-norm of the error $u-\Uth$ to the time-step interval~$I_n$.

\begin{theorem}[Equivalence of norms]\label{thm:Y_norm_relation}
Let the norm $\NEy$ be defined by \eqref{eq:EY_norm_def}, and, for each $1\leq n \leq N$, let the temporal data oscillation $\etaOscTn$ and the coarsening error indicator $\etaCJ$ be defined by
\begin{subequations}
\begin{align}
\etaCJ  &\coloneqq \sqrt{\tfrac{\tau_n ({q_n}+1)}{(2q_n+1)\,(2q_n+3)}} \norm{ \nabla \left\{  u_{\th}(t_{n-1}) - P_h^n [ u_{\th}(t_{n-1}) ] \right\} }, \label{eq:etaCJ_def}
\\ [\etaOscTn]^2 &\coloneqq \int_{I_n}\norm{f(t)- f_{\tau}(t)}_{H^{-1}(\Om)}^2\,\dd t , \label{eq:etaOscT_def}
\end{align}
\end{subequations}
where $P_h^{n}\colon H^1_0(\Om)\tends \Vn$ denotes the elliptic orthogonal projection onto $\Vn$ defined by $(\nabla P_h^n w , \nabla v_h) = (\nabla w, \nabla v_h)$ for all $v_h\in \Vn$.
Then, we have
\begin{equation}\label{eq:main_jump_bound}
\int_{I_n}\norm{\nabla(u_{\th}-\Uth)}^2\dd t \leq 8 \norm{\calR_Y(\Uth)|_{I_n}}_{X^{\prime}}^2 + \min\left\{ [\etaCJ]^2, 8 [\etaOscTn]^2\right\},
\end{equation}
where $\norm{\calR_Y(\Uth)|_{I_n}}_{X^\prime}$ is defined in \eqref{eq:time_localized_residual_norm}. Furthermore, we have
\begin{equation}\label{eq:norm_measure_equivalence}
\norm{u-\Uth}_{Y}^2 \leq \Ey^2 \leq 9 \norm{u-\Uth}_{Y}^2 + \sum_{n=1}^N \min\left\{ [\etaCJ]^2, 8 [\etaOscTn]^2 \right\}.
\end{equation}
\end{theorem}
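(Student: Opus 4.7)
The plan is to first establish \eqref{eq:main_jump_bound} on a single interval $I_n$, and then deduce \eqref{eq:norm_measure_equivalence} by summation combined with the inf--sup identity of Theorem~\ref{thm:inf_sup_parabolic}. Setting $J_{n-1}\coloneqq\lld u_\th\rrd_{n-1}\in\VCR$, the identity already recorded in \eqref{eq:error_measure_Y}, namely $\int_{I_n}\norm{\nabla(u_\th-\Uth)}^2\dd t=\tfrac{\tau_n(q_n+1)}{(2q_n+1)(2q_n+3)}\norm{\nabla J_{n-1}}^2$, reduces the task to bounding $\norm{\nabla J_{n-1}}^2$ by $\norm{\calR_Y(\Uth)|_{I_n}}_{X^{\prime}}^2$ plus one of the two oscillation terms. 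I would produce two independent bounds, one with $[\etaCJ]^2$ and one with $8[\etaOscTn]^2$, and then take the minimum.

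For the $\etaCJ$ branch, I would exploit the $H^1$-orthogonal splitting $\norm{\nabla J_{n-1}}^2=\norm{\nabla P_h^n J_{n-1}}^2+\norm{\nabla(J_{n-1}-P_h^n J_{n-1})}^2$ coming from the defining property of the elliptic projection $P_h^n$. Since $u_\th(t_{n-1}^+)\in\Vn$ is fixed by $P_h^n$, the second summand equals $\norm{\nabla(u_\th(t_{n-1})-P_h^n u_\th(t_{n-1}))}^2$, which after multiplication by the scaling $\tau_n(q_n+1)/[(2q_n+1)(2q_n+3)]$ is exactly $[\etaCJ]^2$. To control the first summand, I would test the residual against $v_\th(t)\coloneqq L_{q_n}^n(t)\,P_h^n J_{n-1}\in\calQ_{q_n}(I_n;\Vn)$, a valid discrete test function. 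Identity \eqref{eq:num_scheme_equiv} yields $\pair{\calR_Y(\Uth)}{v_\th}=\int_{I_n}(\nabla(u_\th-\Uth),\nabla v_\th)\dd t$; expanding the Radau formula for $u_\th-\Uth$ together with the $L^2(I_n)$-orthogonality of the Legendre basis collapses the right-hand side to $\pm\tfrac{\tau_n}{2(2q_n+1)}\norm{\nabla P_h^n J_{n-1}}^2$. Cauchy--Schwarz together with $\norm{v_\th}_X=\sqrt{\tau_n/(2q_n+1)}\norm{\nabla P_h^n J_{n-1}}$ then gives $\norm{\nabla P_h^n J_{n-1}}^2\leq 4(2q_n+1)\tau_n^{-1}\norm{\calR_Y(\Uth)|_{I_n}}_{X^{\prime}}^2$, and scaling closes this branch.

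For the $\etaOscTn$ branch, I would test against $v(t)\coloneqq L_{q_n+1}^n(t)\,J_{n-1}\in X$, whose temporal degree exceeds $q_n$, so it is \emph{not} a valid discrete test function. The idea is that a chain of Legendre orthogonalities cleans up almost every term in $\pair{\calR_Y(\Uth)}{v}$. Since $L_{q_n+1}^n\perp\calP_{q_n}(I_n)$ in $L^2(I_n)$, the defining property of $f_\tau$ forces $\int_{I_n}(f,v)\dd t=\int_{I_n}(f-f_\tau,v)\dd t$. Next, $(\nabla u_\th(\cdot),\nabla J_{n-1})=(\nabla u_\th(\cdot),\nabla P_h^n J_{n-1})$ is a time-polynomial of degree at most~$q_n$, hence orthogonal to $L_{q_n+1}^n$, killing the $\nabla u_\th$-piece of $\int_{I_n}(\nabla\Uth,\nabla v)\dd t$. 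The analogous reasoning for $\p_t u_\th$ (whose time degree is $q_n-1$), combined with the two Legendre-derivative identities $\int_{I_n}\dot L_{q_n}^n L_{q_n+1}^n\dd t=0$ and $\int_{I_n}\dot L_{q_n+1}^n L_{q_n+1}^n\dd t=0$ (the former by degree, the latter by the fact that $(L_{q_n+1}^n)^2$ takes the same value at both endpoints of $I_n$), forces $\int_{I_n}(\p_t\Uth,v)\dd t=0$. What survives is the Radau correction in $\Uth$, producing $\pm\tfrac{\tau_n}{2(2q_n+3)}\norm{\nabla J_{n-1}}^2$. Bounding $|\int_{I_n}(f-f_\tau,v)\dd t|\leq\etaOscTn\norm{v}_X$ with $\norm{v}_X=\sqrt{\tau_n/(2q_n+3)}\norm{\nabla J_{n-1}}$ yields $\norm{\nabla J_{n-1}}\leq 2\sqrt{(2q_n+3)/\tau_n}\,(\norm{\calR_Y(\Uth)|_{I_n}}_{X^{\prime}}+\etaOscTn)$; squaring with $(a+b)^2\leq 2a^2+2b^2$ and using $(q_n+1)/(2q_n+1)\leq 1$ delivers the factor $8$ in front of both terms.

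Combining the two branches by taking the minimum in the oscillation contribution gives \eqref{eq:main_jump_bound}. For \eqref{eq:norm_measure_equivalence}, the lower inequality $\norm{u-\Uth}_Y^2\leq\Ey^2$ is immediate from \eqref{eq:error_measure_Y}. For the upper one I would sum \eqref{eq:main_jump_bound} over $n$, use the direct-sum decomposition of $X$ to identify $\sum_n\norm{\calR_Y(\Uth)|_{I_n}}_{X^{\prime}}^2=\norm{\calR_Y(\Uth)}_{X^{\prime}}^2$, and apply \eqref{eq:Y_error_residual_equivalence} to replace this by $\norm{u-\Uth}_Y^2$ after discarding the nonnegative initial term; the constant $1+8=9$ arises from adding the resulting bound on $\norm{u_\th-\Uth}_X^2$ to the $\norm{u-\Uth}_Y^2$ already appearing in $\Ey^2$. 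I expect the main delicacy to lie in the second branch: one must match the temporal polynomial degree of the test function $L_{q_n+1}^n J_{n-1}$ against the orthogonality of $f_\tau$, of $u_\th$, and of the Legendre derivatives all at once, since a less careful choice would leave stray inner products from $\p_t u_\th$ or from the mismatch between the $H^1$-orthogonality of $P_h^n$ and the $L^2$ pairings that arise from the time derivative.
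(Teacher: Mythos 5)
Your proposal is correct and follows essentially the same route as the paper: the paper's Lemma~\ref{lem:jump_lower_bound} proves exactly your two branches with the test functions $-\tfrac{(-1)^{q_n}}{2}L_{q_n}^n P_h^n\lld u_{\th}\rrd_{n-1}$ and $\tfrac{(-1)^{q_n}}{2}L_{q_n+1}^n\lld u_{\th}\rrd_{n-1}$ (your versions differ only by a harmless normalization), the same Pythagoras splitting for $P_h^n$, the same Legendre orthogonalities, and the same summation via \eqref{eq:Y_error_residual_equivalence} to get the constant $9$. The only cosmetic difference is that the paper dispenses with your Legendre-derivative identities by noting directly that $\p_t\Uth|_{I_n}$ has temporal degree at most $q_n$ and is hence orthogonal to $L_{q_n+1}^n$.
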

We delay the proof of Theorem~\ref{thm:Y_norm_relation} until section~\ref{sec:Y_norm_relation} below.

\begin{remark}[Equivalence]
Theorem~{\upshape\ref{thm:Y_norm_relation}} shows that $\Ey$ and $\norm{u-\Uth}_{Y}$ are globally equivalent up to the minimum of temporal data oscillation and coarsening errors. In particular, one of our key contributions here is to obtain polynomial-degree independent constants in~\eqref{eq:norm_measure_equivalence}.  It is important to note that although $\Ey$ and $\norm{u-\Uth}_{Y}$ are essentially globally equivalent, their local distributions may differ.
\end{remark}

\begin{remark}[Relation to \cite{Verfurth2003}]
A similar result to \eqref{eq:main_jump_bound} was previously obtained in the lowest-order case $q_n=0$ by Verf\"urth~{\upshape\cite{Verfurth2003}}; see in particular the bounds of {\upshape\cite[Section~7]{Verfurth2003}} for what is denoted there $\tfrac{\tau_n}{3} \abs{u_h^n-u_h^{n-1}}_{1}^2$, which is equivalent to $\int_{I_n}\norm{\nabla(u_{\th}-\Uth)}^2\,\dd t$ with $q_n=0$ in our notation.
For higher polynomial degrees, we note that Gaspoz, Kreuzer, Siebert and Ziegler~{\upshape\cite{GaspozKreuzerSiebertZiegler2016}} have obtained independently an inequality of a similar kind as~\eqref{eq:main_jump_bound}.
\end{remark}

We introduce the following a posteriori error estimators and data oscillation terms:
\begin{subequations}\label{eq:estimators}
\begin{align}
\etaEq (t) &\coloneqq  \norm{\sth(t) + \nabla \Uth(t)}_{K} , \label{eq:etaEq_def} \\
\etaJ & \coloneqq \sqrt{\tfrac{\tau_n ({q_n}+1)}{(2q_n+1)(2q_n+3)}}\, \norm{\nabla \lld u_{\th} \rrd_{n-1}}_K, \label{eq:etaJ_def}\\
\etaOscS (t) & \coloneqq \Biggr[\sum_{\elCR\in \CR,\; \elCR \subset K} \frac{h_{\elCR}^2 }{\pi^2} \norm{ f_\tau(t) - f_{\th}(t)}_{\elCR}^2\Biggr]^{\frac{1}{2}}, \label{eq:etaOscS_def} \\
\etaOscT (t) &\coloneqq \norm{f(t)-f_{\tau}(t)}_{H^{-1}(\Om)}, \\
\etaOscInit & \coloneqq \norm{u_0-\Pi_h u_0},
\end{align}
\end{subequations}
where $t\in I_n$, $K\in \calT^n$, the equilibrated flux~$\sth$ is defined in Definition~\ref{def:flux_construction_1}, and where the data approximations $f_{\tau}$ and $f_{\th}$ are respectively defined in section~\ref{sec:data_approximation}.
The two estimators $\etaEq$ and $\etaJ$ are our principal estimators, where $\etaEq$ measures respectively the lack of $\bm{H}(\Div)$-conformity of the gradient of the reconstructed solution $\Uth$, and where $\etaJ$ measures the lack of temporal conformity of the numerical solution~$u_{\th}$.
The term $\etaOscS$ represents the data oscillation due to the spatial discretisation, whereas $\etaOscT$ represents the data oscillation due to the temporal discretisation.
We define the global a posteriori error estimators as
\begin{subequations}\label{eq:etaY_defs}
\begin{align}
\eta_{Y}^2 &\coloneqq \sum_{n=1}^{N} \int\limits_{I_n}\biggr[ \bigg\{\sum_{K\in\calT^n}  [\etaEq + \etaOscS]^2  \bigg\}^{\frac{1}{2}}\hspace{-0.5ex} + \etaOscT  \biggr]^2 \hspace{-0.5ex} \dd t  + [\etaOscInit]^2, \label{eq:tetaY_def} \\
\eta_{\nEy}^2 &\coloneqq \eta_Y^2 + \sum_{n=1}^N\sum_{K\in\calT^n}[\etaJ]^2.
\label{eq:etaY_def}
\end{align}
\end{subequations}
Notice that in the absence of data oscillation, namely if $f=f_{\tau}=f_{\th}$ and $u_0=\Pi_h u_0$, then $\eta_Y$ simplifies to $\eta_Y^2 =\int_0^T \norm{\sth + \nabla \Uth}^2\,\dd t$, and $\eta_{\nEy}$ simplifies to $\eta_{\nEy}^2=\int_0^T\norm{\sth+\nabla \Uth}^2+\norm{\nabla(u_{\th}-\Uth)}^2\,\dd t$.

Recall that we write $a\lesssim b$ for two quantities $a$ and $b$ if $a \leq C b$ with a constant $C$ depending only on the shape regularity of $\calT^n$ and $\CR$, but otherwise independent of the mesh-size, time-step size, and polynomial degrees in space and time.

\begin{theorem}[$\nEy$-norm a posteriori error estimate]\label{thm:Y_norm_guaranteed_efficiency}
Let $u \in Y$ be the weak solution of \eqref{eq:parabolic}, let $u_{\th}\in \Vt$ denote the solution of the numerical scheme~\eqref{eq:num_scheme}, and let $\Uth$ denote its temporal reconstruction, where the operator $\calI$ is defined in~\eqref{eq:def_radau_reconstruction}.
Let $\sth$ denote the equilibrated flux of Definition~{\upshape\ref{def:flux_construction_1}}. Let $\norm{\cdot}_{\nEy}$ be defined in~\eqref{eq:EY_norm_def}, and let the a posteriori error estimators be defined in~\eqref{eq:estimators}, with $\eta_{\nEy}$ defined in~\eqref{eq:etaY_defs}. Then, we have the guaranteed upper bound
\begin{equation}\label{eq:Y_norm_upper_bounds_1}
 \Ey  \leq \eta_{\nEy}.
\end{equation}
Moreover, for each $1\leq n \leq N$ and for each $K\in \calT^n$, the indicators satisfy the following local efficiency bound:
\begin{equation}\label{eq:Y_norm_guaranteed_efficiency_lower_local}
\int_{I_n} [\etaEq]^2 \,\dd t + [\etaJ]^2  \lesssim \sum_{\ta \in \calV_K}\left\{ \Eya^2 +  [\etaOsca]^2 \right\},
\end{equation}
where $\abs{\cdot}_{\nEya}$ is defined in \eqref{eq:EY_norm_def_localized}, $\calV_K$ is the set of vertices of the element $K$, and the local data oscillation term $\etaOsca$ is defined by
\begin{equation}\label{eq:patch_data_oscillation}
[\etaOsca]^2 \coloneqq \int_{I_n} \norm{f-\Pia f}_{H^{-1}(\oma)}^2\,\dd t.
\end{equation}
Furthermore, we have the following global efficiency bound for $\Ey$:
\begin{equation}\label{eq:Y_norm_guaranteed_efficiency_lower_global}
\sum_{n=1}^N  \sum_{K\in\calT^n} \left[ \int_{I_n} [\etaEq]^2 \,\dd t + [\etaJ]^2  \right] \lesssim \Ey^2 + \sum_{n=1}^N \sum_{\ta\in\calVh} [\etaOsca]^2 .
\end{equation}
\end{theorem}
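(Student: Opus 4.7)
The plan splits into three stages, following the three inequalities in the theorem: the guaranteed upper bound, local efficiency, and global efficiency.

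For the upper bound \eqref{eq:Y_norm_upper_bounds_1}, I would start from the decomposition~\eqref{eq:error_measure_Y} of $\Ey^2$. The jump contribution $\norm{u_\th - \Uth}_X^2$ matches $\sum_{n,K}[\etaJ]^2$ exactly by the identity already noted after~\eqref{eq:error_measure_Y}. For the main term $\norm{u-\Uth}_Y^2$, I would apply Theorem~\ref{thm:inf_sup_parabolic} in the equivalent form~\eqref{eq:Y_error_residual_equivalence} with $\vphi = \Uth \in Y$, noting that $\Uth(0) = \Pi_h u_0$ follows from the Radau reconstruction property $\calI u_{\th}(t_{n-1}^+) = u_{\th}(t_{n-1})$ and from $u_\th(0) = \Pi_h u_0$; this accounts for the $[\etaOscInit]^2$ term. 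To bound the residual, substitute the equilibration property~\eqref{eq:sigma_th_equilibration} and integrate by parts on each slab to rewrite
\begin{equation*}
\pair{\calR_Y(\Uth)}{v} = \int_0^T (f - f_{\th},v) - (\sth + \nabla\Uth,\nabla v)\,\dd t.
\end{equation*}
Split $f - f_{\th} = (f-f_\tau) + (f_\tau - f_{\th})$: the first piece contributes $\etaOscT$ by the definition of the $H^{-1}$ dual norm; the second piece has vanishing element mean on every $\elCR \in \CR$ pointwise in $t$ (combining~\eqref{eq:fth_mean_value_zero} with the time-orthogonality of $f_\tau$ applied on each $\elCR$), so Poincaré's inequality on each $\elCR$ yields $\etaOscS$. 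The $(\sth + \nabla\Uth,\nabla v)$ contribution gives $\etaEq$. A Cauchy--Schwarz argument in space (over $K\in\calT^n$) and then in time assembles these pieces into $\eta_Y$.

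For local efficiency \eqref{eq:Y_norm_guaranteed_efficiency_lower_local}, the jump estimator is immediate: since $K\subset\oma$ for every $\ta\in\calV_K$, $[\etaJ]^2$ is dominated by the corresponding jump term inside a single $\Eya^2$. For the flux estimator, I would use the decomposition
\begin{equation*}
\sth + \nabla\Uth = \sum_{\ta\in\calV_K}(\stha - \tautha) + \nabla(\Uth - u_\th),
\end{equation*}
obtained from the partition of unity $\sum_\ta\psia \equiv 1$ on $\Om$ and from~\eqref{eq:tau_def}. The square of the second term integrates exactly to $[\etaJ]^2$, and a $(d+1)$-term Cauchy--Schwarz on the first sum reduces everything to bounding $\int_{I_n}\norm{\stha - \tautha}_\oma^2\,\dd t$ by $\Eya^2 + [\etaOsca]^2$ for each single vertex patch.

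This last step is the main obstacle of the proof and is where polynomial-degree robustness must be secured. The strategy is to read $\stha$ through its mixed formulation~\eqref{eq:flux_construction_equilibrium} (or, after decoupling in time via Lemma~\ref{lem:decoupling}, the spatial problems~\eqref{eq:decoupling_2}) as a minimum-norm lifting, and then apply a space-time extension of the $p$-robust stability of Braess--Pillwein--Sch\"oberl~\cite{Braess2009}. Concretely, $\norm{\stha - \tautha}_\oma$ is expressed as a supremum of a discrete patch residual whose continuous counterpart, after undoing integration by parts, equals the residual of~\eqref{eq:parabolic} localized to $\oma \times I_n$ and tested against $\psia$-weighted functions. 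The polynomial-degree robust lifting result of Lemma~\ref{lem:p_robust_lifting}, which is specifically designed to work on the common-refinement submesh $\Ta$ and thereby dispenses with the transition condition used in~\cite{Verfurth2003}, allows one to pass from the discrete to the continuous residual and to bound the latter by the local energy $\abs{u-\Uth}_{\nEya}$ plus the patch data oscillation $\etaOsca$ caused by replacing $f$ by $\Pia f$ inside $\gtautha$.

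Finally, the global efficiency \eqref{eq:Y_norm_guaranteed_efficiency_lower_global} follows by summing the local bounds over $K\in\calT^n$ and $n$, and using the bounded overlap of vertex patches: each element belongs to exactly $d+1$ patches, so the passage from $\sum_K\sum_{\ta\in\calV_K}$ to $\sum_{\ta\in\calVh}$ costs only a shape-regularity constant, and the subadditivity $\sum_{\ta\in\calVh}\Eya^2 \lesssim \Ey^2$ of the patch seminorms with respect to the overlapping cover completes the argument.
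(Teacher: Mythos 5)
Your proposal is correct and follows essentially the same route as the paper: flux insertion plus the equilibration identity \eqref{eq:sigma_th_equilibration} and the elementwise Poincar\'e inequality for the guaranteed upper bound, the partition-of-unity split $\sth+\nabla\Uth=\sum_{\ta\in\calV_K}(\stha+\psia\nabla u_{\th})+\nabla(\Uth-u_{\th})$ combined with the temporally decoupled $p$-robust lifting of Lemma~\ref{lem:p_robust_lifting} and a bound of the patch residual by the local error plus $\etaOsca$ for local efficiency, and summation over elements for the global bound. The one step you dismiss as ``subadditivity'', namely $\sum_{\ta\in\calVh}\Eya^2\lesssim\Ey^2$, deserves a word more since the $H^{-1}(\oma)$ contributions do not localize as trivially as the $L^2$ gradient terms; the paper handles this via local Riesz representatives and a finite-overlap counting argument (its inequality \eqref{eq:localization_inequality}), which is the easy direction of dual-norm localization, so your plan does go through.
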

The proof of Theorem~\ref{thm:Y_norm_guaranteed_efficiency} is postponed to the following sections: the proof of the upper bound~\eqref{eq:Y_norm_upper_bounds_1} is given in section~\ref{sec:proof_upper}, and the proof of the bounds~\eqref{eq:Y_norm_guaranteed_efficiency_lower_local}~and~\eqref{eq:Y_norm_guaranteed_efficiency_lower_global} is the subject of section~\ref{sec:efficiency_Y_norm}.
Theorem~{\upshape\ref{thm:Y_norm_guaranteed_efficiency}} shows the local space-time efficiency of the estimators with respect to $\Ey$.
As a consequence of the proof of Theorem~\ref{thm:Y_norm_guaranteed_efficiency}, we can also show guaranteed upper bounds and local-in-time and global-in-space efficiency of the estimators with respect to $\norm{u-\Uth}_Y$, thereby generalising the results to {\upshape\cite{Verfurth2003}} to higher-order approximations, see Corollary~{\upshape\ref{cor:space_global_efficiency}} below.
\begin{corollary}[$Y$-norm a posteriori error estimate]\label{cor:space_global_efficiency}
Let the estimator $\eta_Y$ be defined by \eqref{eq:tetaY_def}. Then, we have
\begin{equation}\label{eq:Y_norm_upper_bounds_2}
\norm{u-\Uth}_{Y} \leq \eta_Y.
\end{equation}
Furthermore, for each $1\leq n \leq N$, we have
\begin{multline}\label{eq:Y_norm_alternative_efficiency}
\sum_{K\in\calT^n} \left[ \int_{I_n} [\etaEq]^2 \,\dd t + [\etaJ]^2  \right] \lesssim \int_{I_n} \norm{\p_t(u-\Uth)}_{H^{-1}(\Om)}^2+\norm{\nabla(u-\Uth)}^2 \,\dd t  \\ + \min\left\{ [\etaCJ]^2, 8 [\etaOscTn]^2\right\} + \sum_{\ta\in\calVh}  [\etaOsca]^2 .
\end{multline}
\end{corollary}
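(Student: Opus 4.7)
}

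The argument factors into two essentially independent parts. For the upper bound \eqref{eq:Y_norm_upper_bounds_2}, my plan is to extract exactly the $Y$-norm piece of the argument underlying \eqref{eq:Y_norm_upper_bounds_1} in section~\ref{sec:proof_upper} while dropping all jump contributions. Starting from the identity \eqref{eq:Y_error_residual_equivalence} at $\vphi=\Uth$, one has $\norm{u-\Uth}_{Y}^{2} = \norm{\calR_{Y}(\Uth)}_{X'}^{2} + [\etaOscInit]^{2}$. For arbitrary $v\in X$, the equilibration property \eqref{eq:sigma_th_equilibration} together with integration by parts (using $v|_{\DO}=0$ and $\sth\in L^{2}(0,T;\Hdiv)$) gives $\pair{\calR_{Y}(\Uth)}{v} = \int_{0}^{T} (f-f_{\th},v) - (\sth+\nabla\Uth,\nabla v)\,\dd t$. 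I would then split $f-f_{\th}=(f-f_{\tau})+(f_{\tau}-f_{\th})$: the first difference is dualized directly into $H^{-1}(\Om)$ to yield the $\etaOscT$ contribution, while for the second I exploit that $f_{\tau}-f_{\th}$, being polynomial in time of degree at most $q_{n}$, inherits from \eqref{eq:fth_mean_value_zero} a vanishing spatial mean on every $\elCR\in\CR$ at every time. This permits subtracting the elementwise mean of $v$ on $\elCR$ and invoking the Poincar\'e inequality with sharp constant $h_{\elCR}/\pi$, producing exactly $\etaOscS$. The principal term is controlled elementwise by $\etaEq$. Assembling via discrete Cauchy--Schwarz over $K\in\calT^{n}$ and then Cauchy--Schwarz in $t$ gives $\norm{\calR_{Y}(\Uth)}_{X'}^{2} \leq \eta_{Y}^{2} - [\etaOscInit]^{2}$, which closes \eqref{eq:Y_norm_upper_bounds_2}.

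For the efficiency bound \eqref{eq:Y_norm_alternative_efficiency}, the plan is to start from the already-proved local lower bound \eqref{eq:Y_norm_guaranteed_efficiency_lower_local} and sum over $K\in\calT^{n}$. Shape regularity bounds the vertex-element incidence and converts $\sum_{K\in\calT^{n}}\sum_{\ta\in\calV_{K}}$ into $\lesssim \sum_{\ta\in\calV^{n}}$. Expanding $\abs{u-\Uth}_{\nEya}$ via \eqref{eq:error_measure_Y_localized} and using the bounded overlap of the patches $\{\oma\}_{\ta\in\calV^{n}}$ collapses the $L^{2}$-based seminorms on the right-hand side into $\int_{I_{n}}\norm{\nabla(u-\Uth)}^{2}\,\dd t$ and $\int_{I_{n}}\norm{\nabla(u_{\th}-\Uth)}^{2}\,\dd t$; the analogous patch-to-global control of the $H^{-1}$-norm of the time derivative produces the $\int_{I_{n}}\norm{\p_{t}(u-\Uth)}_{H^{-1}(\Om)}^{2}\,\dd t$ contribution, and the patch oscillations $\etaOsca$ are merely summed. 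Finally, the global jump integral $\int_{I_{n}}\norm{\nabla(u_{\th}-\Uth)}^{2}\,\dd t$ is eliminated by invoking \eqref{eq:main_jump_bound} of Theorem~\ref{thm:Y_norm_relation}, which trades it for $8\norm{\calR_{Y}(\Uth)|_{I_{n}}}_{X'}^{2} + \min\{[\etaCJ]^{2},8[\etaOscTn]^{2}\}$; the first summand is dominated by $\int_{I_{n}} \norm{\p_{t}(u-\Uth)}_{H^{-1}(\Om)}^{2} + \norm{\nabla(u-\Uth)}^{2}\,\dd t$ straight from the definition of $\calR_{Y}$ and thus is absorbed into the error terms already present on the right-hand side.

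The main potential snag I anticipate is the patch localization of the dual norm, namely $\sum_{\ta\in\calV^{n}} \norm{\p_{t}(u-\Uth)}_{H^{-1}(\oma)}^{2} \lesssim \norm{\p_{t}(u-\Uth)}_{H^{-1}(\Om)}^{2}$: unlike its $L^{2}$ counterpart, this is not immediate and requires a short duality argument based on the uniform $H^{1}_{0}$-continuity of the partition-of-unity multipliers $v\mapsto \psia v$, with constants that depend only on shape regularity. Once that inequality is in place, everything else is a bookkeeping exercise that carves the relevant pieces out of the proofs of Theorem~\ref{thm:Y_norm_guaranteed_efficiency} already developed in section~\ref{sec:proof_upper} and section~\ref{sec:efficiency_Y_norm}; no new equilibration, no new flux reconstruction, and no new localization operator are required.
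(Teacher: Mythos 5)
Your proposal follows the paper's own proof essentially verbatim: the upper bound \eqref{eq:Y_norm_upper_bounds_2} is carved out of section~\ref{sec:proof_upper} exactly as you describe, and the efficiency bound \eqref{eq:Y_norm_alternative_efficiency} is obtained, as in section~\ref{sec:efficiency_Y_norm}, by summing \eqref{eq:Y_norm_guaranteed_efficiency_lower_local} over $K\in\calT^n$, localizing via \eqref{eq:localization_inequality}, and then invoking \eqref{eq:main_jump_bound} together with the observation that $\norm{\calR_Y(\Uth)|_{I_n}}_{X^\prime}^2\leq\int_{I_n}\norm{\p_t(u-\Uth)}_{H^{-1}(\Om)}^2+\norm{\nabla(u-\Uth)}^2\,\dd t$. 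The step you single out, $\sum_{\ta\in\calVh}\norm{\p_t(u-\Uth)}_{H^{-1}(\oma)}^2\lesssim\norm{\p_t(u-\Uth)}_{H^{-1}(\Om)}^2$, is indeed the only nontrivial localization; the paper handles it by the finite-overlap counting argument with local Riesz representatives (cf.~\cite{BlechtaMalekVohralik2016}) rather than via the multipliers $v\mapsto\psia v$, but this is a difference of implementation, not of substance.
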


\begin{remark}[Temporal data oscillation]
The temporal data oscillation term~$\etaOscT$ is defined with respect to a negative norm, as usual in the literature~{\upshape\cite{ErnVohralik2010,Verfurth2003}}. Similarly to {\upshape\cite{ErnVohralik2010,Verfurth2003}}, this temporal data oscillation term can be of the same order as the error in terms of the time-step size. Since this term already appears in the upper bounds of the residual-based estimates of {\upshape\cite[Eq.~(1.5)]{Verfurth2003}}, it is seen that this issue is not related to the choice of equilibrated flux a posteriori error estimators, but is rather a part of the error estimation in the $Y$-norm. In practical computations, it is often advisable to determine a minimal temporal resolution for reducing this term to within a prescribed tolerance, in advance of solving the numerical scheme \eqref{eq:num_scheme}. Although the negative norm appearing in the definition of $\etaOscT$ is non-computable, there are several possibilities for estimating it. First, we mention that $\etaOscT$ is bounded from above by $C_{\Om}\norm{f-f_{\tau}}$, with $C_{\Om}$ the constant of the global Poincar\'e inequality, although this can be pessimistic in practice. If $f$ is a finite tensorial product of spatial and temporal functions, then sharper bounds can be obtained by solving a set of independent coarse and low-order conforming approximations for elliptic problems, followed by equilibrated flux a posteriori error estimates to achieve guaranteed upper bounds. Finally, we also mention that this issue motivates a posteriori error estimators in other norms: in particular, we show in {\upshape\cite{ErnSmearsVohralik2016b}} that $X$-norm a posteriori estimates benefit from data oscillation terms that are of higher-order by an additional factor of $\sqrt{\tau}+h$.
\end{remark}

\section{Proof of equivalence between $\Ey$ and $\norm{u-\Uth}_Y$}\label{sec:Y_norm_relation}
In this section, we prove Theorem~\ref{thm:Y_norm_relation}, along with some corollary results, which relate $\Ey$ with $\norm{u-\Uth}_Y$.
Our starting point involves the following two original bounds on the norms of the jumps, which generalize one of the key results of Verf\"urth~\cite{Verfurth2003} for the lowest-order case $q_n=0$. In fact, our result sharpens and simplifies the proof of the result of \cite{Verfurth2003} even in the lowest-order case.

\begin{lemma}\label{lem:jump_lower_bound}
For each $1\leq n \leq N$, let $P_h^{n}\colon H^1_0(\Om)\tends \Vn$ denote the elliptic orthogonal projection to $\Vn$ defined by $(\nabla P_h^n w , \nabla v_h) = (\nabla w, \nabla v_h) $ for all $v_h\in \Vn$. Then, for each $1\leq n\leq N$, the jump $\lld u_{\th} \rrd_{n-1}$ satisfies
\begin{multline}\label{eq:jump_lower_bound}
\frac{\tau_n}{8q_n+4} \norm{\nabla \lld u_{\th} \rrd_{n-1} }^2 \leq \norm{\calR_Y(\Uth)|_{I_n}}_{X^\prime}^2 \\ + \frac{\tau_n}{8q_n+4} \norm{\nabla \left\{ u_{\th}(t_{n-1}) - P_h^n [ u_{\th}(t_{n-1})] \right\} }^2,
\end{multline}
where $\norm{\calR_Y(\Uth)|_{I_n}}_{X^\prime}$ is defined in~\eqref{eq:time_localized_residual_norm}.
Furthermore, we also have the alternative bound
\begin{equation}\label{eq:jump_lower_bound_alternative}
\frac{\tau_n}{8 q_n+12}\norm{\nabla \lld u_{\th} \rrd_{n-1} }^2
\leq 2 \left( \norm{\calR_Y(\Uth)|_{I_n}}_{X^\prime}^2
+ [\etaOscTn]^2 \right).
\end{equation}
\end{lemma}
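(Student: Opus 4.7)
Let $w \coloneqq \lld u_{\th}\rrd_{n-1}$, which lies in $\VCR \subset H^1_0(\Om)$ and depends only on the spatial variable. From the definition \eqref{eq:def_radau_reconstruction} of the reconstruction operator, on $I_n$ we have the explicit formula
\[
\nabla(u_{\th}-\Uth)(t,x) = -\tfrac{(-1)^{q_n}}{2}\bigl(L_{q_n}^n(t) - L_{q_n+1}^n(t)\bigr)\,\nabla w(x).
\]
The plan is to obtain both bounds by testing the residual $\calR_Y(\Uth)$ against carefully chosen separated-variable functions of the form $v(t,x) = \phi(t) z(x)$, and to exploit the $L^2(I_n)$-orthogonality of the Legendre polynomials $L_{q_n}^n$ and $L_{q_n+1}^n$ to polynomials of lower degree, together with the identities $\int_{I_n}(L_q^n)^2\,\dd t = \tau_n/(2q+1)$.

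For \eqref{eq:jump_lower_bound}, I would set $w_h \coloneqq P_h^n w \in \Vn$ and test with $v \coloneqq L_{q_n}^n\, w_h$, which is a valid discrete test function in $\calQ_{q_n}(I_n;\Vn)$. The Galerkin relation \eqref{eq:num_scheme_equiv} then gives
\[
\pair{\calR_Y(\Uth)|_{I_n}}{v} = \int_{I_n}(\nabla u_{\th}-\nabla\Uth,\nabla v)\,\dd t = -\tfrac{(-1)^{q_n}}{2}\cdot\tfrac{\tau_n}{2q_n+1}\norm{\nabla w_h}^2,
\]
where I use $(\nabla w,\nabla w_h) = \norm{\nabla w_h}^2$ by definition of the elliptic projector $P_h^n$. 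Since $\norm{v}_X = \sqrt{\tau_n/(2q_n+1)}\,\norm{\nabla w_h}$, the dual-norm bound $\abs{\pair{\calR_Y(\Uth)|_{I_n}}{v}} \leq \norm{\calR_Y(\Uth)|_{I_n}}_{X^\prime}\norm{v}_X$ and elementary rearrangement yield $\tfrac{\tau_n}{8q_n+4}\norm{\nabla w_h}^2 \leq \norm{\calR_Y(\Uth)|_{I_n}}_{X^\prime}^2$. The Pythagorean identity $\norm{\nabla w}^2 = \norm{\nabla w_h}^2 + \norm{\nabla(w-w_h)}^2$ (again by elliptic orthogonality) immediately gives \eqref{eq:jump_lower_bound}.

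For \eqref{eq:jump_lower_bound_alternative}, the plan is to test with $v_* \coloneqq L_{q_n+1}^n\, w$, which avoids projecting $w$ but is \emph{not} a discrete test function since its temporal degree is $q_n+1$. Nevertheless, the functions $\p_t\Uth$, $\nabla u_{\th}$, and $f_\tau$ are all polynomials in time of degree at most $q_n$ over $I_n$, so the $L^2(I_n)$-orthogonality of $L_{q_n+1}^n$ to $\calQ_{q_n}(I_n;\R)$ makes the integrals $\int_{I_n}(f_\tau - \p_t\Uth, v_*)\,\dd t$ and $\int_{I_n}(\nabla u_{\th},\nabla v_*)\,\dd t$ vanish. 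Substituting $\nabla\Uth = \nabla u_{\th} + \tfrac{(-1)^{q_n}}{2}(L_{q_n}^n - L_{q_n+1}^n)\nabla w$ into the definition of $\calR_Y(\Uth)$ and using $\int_{I_n}L_{q_n}^n L_{q_n+1}^n\,\dd t = 0$ together with $\int_{I_n}(L_{q_n+1}^n)^2\,\dd t = \tau_n/(2q_n+3)$, I obtain
\[
\pair{\calR_Y(\Uth)|_{I_n}}{v_*} - \int_{I_n}(f - f_\tau, v_*)\,\dd t = \tfrac{(-1)^{q_n}}{2}\cdot\tfrac{\tau_n}{2q_n+3}\norm{\nabla w}^2.
\]
Applying the triangle and Cauchy--Schwarz inequalities together with $\norm{v_*}_X = \sqrt{\tau_n/(2q_n+3)}\,\norm{\nabla w}$ and the definition $[\etaOscTn]^2 = \int_{I_n}\norm{f-f_\tau}_{H^{-1}(\Om)}^2\,\dd t$, then squaring and using $(a+b)^2 \leq 2a^2 + 2b^2$, delivers \eqref{eq:jump_lower_bound_alternative}.

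The only genuinely delicate point is the second bound: one has to spot that using $v_* = L_{q_n+1}^n w$ (allowing a non-discrete test function) is precisely what kills the Galerkin-orthogonal terms through the temporal orthogonality of $L_{q_n+1}^n$, leaving behind only the temporal oscillation $(f-f_\tau,v_*)$ and the desired quadratic form in $\norm{\nabla w}$. Once this observation is in place, both proofs reduce to direct integration of polynomials in time.
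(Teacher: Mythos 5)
Your proposal is correct and follows essentially the same route as the paper: for \eqref{eq:jump_lower_bound} the paper tests the scheme \eqref{eq:num_scheme_equiv} with $-\tfrac{(-1)^{q_n}}{2}L_{q_n}^n P_h^n\lld u_{\th}\rrd_{n-1}$ and uses the same Pythagorean splitting, and for \eqref{eq:jump_lower_bound_alternative} it uses the non-discrete test function $\tfrac{(-1)^{q_n}}{2}L_{q_n+1}^n\lld u_{\th}\rrd_{n-1}$, exactly exploiting the Legendre orthogonality as you do. The only cosmetic difference is that the paper normalizes the test functions so that the key identity reads $\int_{I_n}\norm{\nabla v}^2\,\dd t=\pair{\calR_Y(\Uth)|_{I_n}}{v}+\dots$ directly, whereas you divide by $\norm{v}_X$ afterwards; both yield the same constants.
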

\begin{proof}
First, note that $ (u_{\th} - \Uth)|_{I_n} = \tfrac{(-1)^{q_n}}{2} ( L_{q_n+1}^n-L_{q_n}^n) \lld u_{\th} \rrd_{n-1}$ belongs to the space $\calQ_{q_n+1}(I_n;\VCR)$. We define the test function $v_{\th} \coloneqq - \tfrac{(-1)^{q_n}}{2} L_{q_n}^n P_h^n \lld u_{\th} \rrd_{n-1}$, which belongs to $\calQ_{q_n}(I_n;\Vn) $, and we use it in equation~\eqref{eq:num_scheme_equiv} for the numerical scheme, which yields, by orthogonality of the Legendre polynomials and by the definition of the orthogonal projector $P_h^n$, the identity
\begin{equation}\label{eq:jump_lower_bound_2}
\begin{split}
 \int_{I_n} \norm{\nabla v_{\th}}^2 \,\dd t & = \frac{\tau_n}{8q_n+4} \norm{\nabla P_h^n \lld u_{\th} \rrd_{n-1} }^2
 = \int_{I_n} (\nabla (u_{\th} - \Uth), \nabla v_{\th}) \,\dd t \\
 & = \int_{I_n} ( f-\p_t \Uth,v_{\th})  - (\nabla \Uth,\nabla v_{\th}) \,\dd t .
\end{split}
\end{equation}
Therefore, we have $\int_{I_n} \norm{\nabla v_{\th}}^2 \,\dd t\leq \norm{\calR_Y(\Uth)|_{I_n}}_{X^\prime}^2$.
This bound yields the desired result \eqref{eq:jump_lower_bound} once it is combined with \eqref{eq:jump_lower_bound_2} and the orthogonality relation
\[
\begin{split}
\norm{\nabla P_h^n \lld u_{\th} \rrd_{n-1} } ^2
  & = \norm{\nabla \lld u_{\th} \rrd_{n-1}}^2 - \norm{\nabla \left\{ \lld u_{\th} \rrd_{n-1} - P_h^n  \lld u_{\th} \rrd_{n-1} \right\} }^2 \\
  & = \norm{\nabla \lld u_{\th} \rrd_{n-1}}^2 - \norm{\nabla \left\{ u_{\th}(t_{n-1}) - P_h^n [ u_{\th}(t_{n-1}) ] \right\} }^2,
\end{split}
\]
where the last equality above follows from the facts that $\lld u_{\th} \rrd_{n-1} = u_{\th}(t_{n-1}) - u_{\th}(t_{n-1}^+)$ and that $u_{\th}(t_{n-1}^+) \in \Vn$. This completes the proof of the first bound~\eqref{eq:jump_lower_bound}.

We now turn to the proof of~\eqref{eq:jump_lower_bound_alternative}; the main difference in the proofs of~\eqref{eq:jump_lower_bound} and~\eqref{eq:jump_lower_bound_alternative} is that above we appealed to the numerical scheme using a discrete test function, whereas to establish \eqref{eq:jump_lower_bound_alternative}, we shall now consider a higher-order polynomial function that is not in the discrete test space. We define $v$ on $I_n$ by $v|_{I_n} \coloneqq \frac{(-1)^{q_n}}{2} L_{q_n+1}^n \, \lld u_{\th} \rrd_{n-1}$, and then we extend $v$ by zero outside of $I_n$, so that $v\in X$. Then, by orthogonality of the Legendre polynomial $L_{q_n+1}^n$ to all polynomials of degree at most $q_n$ on $I_n$, we have the identities
$\int_{I_n} (f_\tau , v)\,\dd t = 0$, $\int_{I_n} (\p_t \Uth, v) \,\dd t =0$ and $\int_{I_n} (\nabla u_{\th},\nabla v) \,\dd t = 0$. Therefore, we obtain
\[
\begin{split}
 \int_{I_n} \norm{\nabla v}^2\,\dd t &= \frac{\tau_n}{8q_n+12} \norm{\nabla  \lld u_{\th} \rrd_{n-1} }^2 = \int_{I_n} (\nabla (u_{\th}-\Uth),\nabla v)\,\dd t
\\ & = \int_{I_n} (f,v)-(\p_t \Uth,v)-(\nabla \Uth,\nabla v) + (f_{\tau}-f,v) \,\dd t.
\end{split}
\]
The desired result \eqref{eq:jump_lower_bound_alternative} then follows straightforwardly from the above identity.\qquad\end{proof}

\paragraph{Proof of~Theorem~{\upshape\ref{thm:Y_norm_relation}}}
The first inequality $\norm{u-\Uth}_{Y}^2\leq \Ey^2$ is obvious from the definition of $\Ey$ in~\eqref{eq:error_measure_Y}.
Recalling the definitions of $\etaJ$ in \eqref{eq:etaJ_def} and $\etaCJ$ in \eqref{eq:etaCJ_def}, we deduce from \eqref{eq:jump_lower_bound} and \eqref{eq:jump_lower_bound_alternative} that
\begin{equation}
\sum_{K\in\calT^n}[\etaJ]^2 \leq \frac{4(q_n+1)}{(2q_n+3)} \norm{\calR_Y(\Uth)|_{I_n}}_{X^\prime}^2 + [\etaCJ]^2 \leq 2 \norm{\calR_Y(\Uth)|_{I_n}}_{X^\prime}^2 + [\etaCJ]^2,
\end{equation}
and that
\begin{equation}
\begin{split}
\sum_{K\in\calT^n}[\etaJ]^2
&\leq \frac{8(q_n+1)}{(2q_n+1)} \left( \norm{\calR_Y(\Uth)|_{I_n}}_{X^\prime}^2 +  [\etaOscTn]^2 \right) \\
& \leq 8 \norm{\calR_Y(\Uth)|_{I_n}}_{X^\prime}^2 + 8 [\etaOscTn]^2.
\end{split}
\end{equation}
Therefore, we obtain \eqref{eq:main_jump_bound} by taking the minimum of the right-hand sides of the above bounds.
Finally, we get \eqref{eq:norm_measure_equivalence} by summing the above inequality over all time-steps and noting that $\sum_{n=1}^N  \norm{\calR_Y(\Uth)|_{I_n}}_{X^\prime}^2 = \norm{\calR_Y(\Uth)}_{X^\prime}^2 \leq \norm{u-\Uth}_{Y}^2 $ which follows from \eqref{eq:Y_error_residual_equivalence}. \qquad\endproof

It is possible to obtain slightly sharper variants of Theorem~\ref{thm:Y_norm_relation} under more specific assumptions. For instance, the following corollary shows that $\Ey$ is equivalent to $\norm{u-\Uth}_{Y}$, without any additional data oscillation, whenever the mesh coarsening error is kept relatively small to the jumps.
\begin{corollary}\label{cor:error_measure_equivalence}
Using the notation of Lemma~{\upshape\ref{lem:jump_lower_bound}}, assume that there exists a constant $\theta \in [0,1)$ such that $\norm{\nabla \left[ \lld u_{\th} \rrd_{n-1} - P_h^n  \lld u_{\th} \rrd_{n-1} \right] }^2 \leq \theta \norm{\nabla \lld u_{\th} \rrd_{n-1} }^2$ for each $1\leq n\leq N$.
Then, we have
\begin{equation}\label{eq:error_measure_equivalence_cor}
\norm{u-\Uth}_{Y}^2 \leq \Ey^2 \leq \frac{3-\theta}{1-\theta} \norm{u-\Uth}_{Y}^2.
\end{equation}
\end{corollary}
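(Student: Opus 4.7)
The lower bound $\norm{u-\Uth}_Y^2 \leq \Ey^2$ is immediate from the definition \eqref{eq:error_measure_Y}, so the plan focuses on the upper bound. The key is to revisit Lemma~\ref{lem:jump_lower_bound}: the first bound \eqref{eq:jump_lower_bound} contains precisely the term $\norm{\nabla\{u_{\th}(t_{n-1}) - P_h^n[u_{\th}(t_{n-1})]\}}^2$ that the hypothesis controls. Since $u_{\th}(t_{n-1}^+) \in \Vn$ and hence $P_h^n u_{\th}(t_{n-1}^+) = u_{\th}(t_{n-1}^+)$, we have $\lld u_{\th} \rrd_{n-1} - P_h^n \lld u_{\th} \rrd_{n-1} = u_{\th}(t_{n-1}) - P_h^n u_{\th}(t_{n-1})$, so the hypothesis reads exactly
\[
\tfrac{\tau_n}{8q_n+4}\norm{\nabla \{u_{\th}(t_{n-1}) - P_h^n[u_{\th}(t_{n-1})]\}}^2 \leq \theta\,\tfrac{\tau_n}{8q_n+4}\norm{\nabla \lld u_{\th} \rrd_{n-1}}^2.
\]
Inserting this into \eqref{eq:jump_lower_bound} and absorbing to the left-hand side yields
\[
(1-\theta)\,\tfrac{\tau_n}{8q_n+4}\,\norm{\nabla \lld u_{\th} \rrd_{n-1}}^2 \leq \norm{\calR_Y(\Uth)|_{I_n}}_{X^\prime}^2.
\]

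Next, I would translate the jump back into $\int_{I_n}\norm{\nabla(u_{\th}-\Uth)}^2 \dd t$ using the identity from \eqref{eq:error_measure_Y}. The ratio of the two prefactors is
\[
\frac{\tau_n(q_n+1)/[(2q_n+1)(2q_n+3)]}{\tau_n/(8q_n+4)} = \frac{4(q_n+1)}{2q_n+3},
\]
which is bounded above by $2$ uniformly in $q_n\geq 0$ (since $4q_n+4\leq 4q_n+6$). Hence
\[
\int_{I_n}\norm{\nabla(u_{\th}-\Uth)}^2 \dd t \leq \tfrac{2}{1-\theta}\,\norm{\calR_Y(\Uth)|_{I_n}}_{X^\prime}^2.
\]
This polynomial-degree-uniform bound on the ratio is the one step to watch, since it is what makes the final constant $(3-\theta)/(1-\theta)$ robust in $q_n$; otherwise the argument is purely algebraic.

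To conclude, sum over $n$. The dual norms of the localized residuals add in squares, i.e.\ $\sum_n\norm{\calR_Y(\Uth)|_{I_n}}_{X^\prime}^2 = \norm{\calR_Y(\Uth)}_{X^\prime}^2$, since any $v\in X$ decomposes additively on the $I_n$ with $\norm{v}_X^2 = \sum_n\norm{v|_{I_n}}_X^2$. The inf-sup identity \eqref{eq:Y_error_residual_equivalence} then gives $\norm{\calR_Y(\Uth)}_{X^\prime}^2 \leq \norm{u-\Uth}_Y^2$, and therefore
\[
\norm{u_{\th}-\Uth}_X^2 = \sum_{n=1}^N \int_{I_n}\norm{\nabla(u_{\th}-\Uth)}^2 \dd t \leq \tfrac{2}{1-\theta}\,\norm{u-\Uth}_Y^2.
\]
Adding $\norm{u-\Uth}_Y^2$ on both sides and using \eqref{eq:error_measure_Y} yields
\[
\Ey^2 = \norm{u-\Uth}_Y^2 + \norm{u_{\th}-\Uth}_X^2 \leq \left(1+\tfrac{2}{1-\theta}\right)\norm{u-\Uth}_Y^2 = \tfrac{3-\theta}{1-\theta}\,\norm{u-\Uth}_Y^2,
\]
which is the desired bound. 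There is no real obstacle here beyond the two places where care is needed: invoking \eqref{eq:jump_lower_bound} rather than \eqref{eq:jump_lower_bound_alternative} so that the error term is exactly the coarsening term controlled by the hypothesis (thus avoiding any data oscillation), and verifying the uniform-in-$q_n$ bound on the prefactor ratio.
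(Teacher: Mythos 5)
Your proof is correct and follows essentially the same route as the paper: it invokes the first bound \eqref{eq:jump_lower_bound} of Lemma~\ref{lem:jump_lower_bound}, absorbs the coarsening term via the hypothesis to get $\frac{\tau_n}{8q_n+4}\norm{\nabla\lld u_{\th}\rrd_{n-1}}^2 \leq \frac{1}{1-\theta}\norm{\calR_Y(\Uth)|_{I_n}}_{X^\prime}^2$, and then proceeds exactly as in the proof of Theorem~\ref{thm:Y_norm_relation} (the ratio bound $\frac{4(q_n+1)}{2q_n+3}\leq 2$, summation over $n$, and the residual--error identity \eqref{eq:Y_error_residual_equivalence}). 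The only difference is that you spell out the steps the paper compresses into ``proceeding as in the proof of Theorem~\ref{thm:Y_norm_relation}.''
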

\begin{proof}
The result is a consequence of Lemma~\ref{lem:jump_lower_bound} and the fact that $\lld u_{\th} \rrd_{n-1} - P_h^n  \lld u_{\th} \rrd_{n-1} = u_{\th}(t_{n-1}) - P_h^n [ u_{\th}(t_{n-1}) ]$ , which leads to $\frac{\tau_n}{8q_n+4} \norm{\nabla \lld u_{\th} \rrd_{n-1} }^2 \leq \tfrac{1}{1-\theta} \norm{\calR_Y(\Uth)|_{I_n}}_{X^\prime}^2$.
Proceeding as in the proof of Theorem~\ref{thm:Y_norm_relation} then yields~\eqref{eq:error_measure_equivalence_cor}.
\end{proof}

\section{Proof of the guaranteed upper bound}\label{sec:proof_upper}
We prove here~\eqref{eq:Y_norm_upper_bounds_1}~and~\eqref{eq:Y_norm_upper_bounds_2}.
First, it is clear from \eqref{eq:error_measure_Y} that  \eqref{eq:Y_norm_upper_bounds_2} immediately implies \eqref{eq:Y_norm_upper_bounds_1}. Therefore, it remains to show \eqref{eq:Y_norm_upper_bounds_2}.
Keeping in mind the equivalence identity~\eqref{eq:Y_error_residual_equivalence} between norms of the errors and residuals, we turn our attention to bounds for the residual norm $\norm{\calR_Y(\Uth)}_{X^\prime} = \sup_{v\in X\setminus\{0\}} \By(u-\Uth,v)/\norm{v}_X$.
To this end, consider an arbitrary function $v\in X$ such that $\norm{v}_X = 1$.
Then, we obtain
\[
\pair{\calR_Y(\Uth)}{v}
= \int_0^T (f-\p_t\Uth - \nabla{\cdot} \sth,v) - (\sth+\nabla\Uth,\nabla v) \,\dd t,
\]
where we have inserted the flux $\sth$ and used integration by parts over $\Om$.
Next, we use~\eqref{eq:sigma_th_equilibration}, and we write $f-f_{\th}=f-f_{\tau} + f_{\tau}-f_{\th}$.
For any $\elCR \in \CR$, $1\leq n \leq N$, we deduce from \eqref{eq:fth_mean_value_zero} that the function $t\mapsto (f_\tau (t) - f_{\th}(t),1)_{\elCR}$, which is a real-valued polynomial of degree at most $q_n$ on $I_n$, vanishes identically on $I_n$.
Therefore, letting $v_{\elCR}(t)$ denote the mean value of $v(t)$ over the element $\elCR\in \CR$, which is defined for a.e.\ $t\in I_n$, we deduce from the Poincar\'e inequality that $\abs{(f_\tau (t) - f_{\th}(t),v(t))_{\elCR} } \leq \frac{ h_{\elCR} }{\pi}  \norm{f_\tau (t) - f_{\th}(t)}_{\elCR} \norm{\nabla v(t)}_{\elCR}$.
Therefore, $\pair{ \calR_Y(\Uth) }{v}$ can be bounded as follows:
\[
\begin{split}
&\pair{\calR_Y(\Uth)}{v}  = \sum_{n=1}^N \int_{I_n}  (f-f_{\th},v) - (\sth+\nabla\Uth,\nabla v) \,\dd t \\
&\leq \sum_{n=1}^N \int\limits_{I_n} \sum_{K\in \calT^n} \etaEq \norm{\nabla v}_K + \Biggr[ \sum_{\elCR\in\CR} \frac{ h_{\elCR} }{\pi}  \norm{f_\tau (t) - f_{\th}(t)}_{\elCR} \norm{\nabla v(t)}_{\elCR} \Biggr] + \etaOscT \norm{\nabla v} \dd t \\
& \leq \sum_{n=1}^N \int\limits_{I_n} \Biggr[\sum_{K\in\calT^n} [\etaEq+\etaOscS]\norm{\nabla v}_K \Biggr]+ \etaOscT \norm{\nabla v} \,\dd t \\
& \leq \sum_{n=1}^N \int\limits_{I_n}\Biggr[ \left\{ \sum_{K\in\calT^n} [\etaEq+\etaOscS]^2 \right\}^{\frac{1}{2}} +  \etaOscT \Biggr] \norm{\nabla v}\, \dd t .
\end{split}
\]
Applying the Cauchy--Schwarz inequality leads to an upper bound for $\norm{ \calR_Y(\Uth) }_{X^\prime}$, which we then combine with the identity \eqref{eq:Y_error_residual_equivalence}  relating errors and residuals to obtain $\norm{u-\Uth}_{Y}\leq \eta_Y$. The corresponding upper bound $\Ey \leq \eta_{\nEy}$ then follows immediately, as explained above.
\qquad\endproof

\section{Proof of local space-time efficiency and robustness}\label{sec:efficiency_Y_norm}
We prove here the bounds~\eqref{eq:Y_norm_guaranteed_efficiency_lower_local}, \eqref{eq:Y_norm_guaranteed_efficiency_lower_global}, and~\eqref{eq:Y_norm_alternative_efficiency}.

\subsection{Preliminary result}
The following lemma is a generalisation of important results on polynomial-degree robustness of equilibrated flux estimates from \cite[Thm.~7]{Braess2009}, in two space dimensions, and \cite[Thm~2.3]{ErnVohralik2016} in three space dimensions.
In particular, Lemma~\ref{lem:p_robust_lifting} comes from \cite[Thm~1.2]{ErnSmearsVohralik2016c} on the existence of a discrete polynomial-degree robust $\bm{H}(\Div)$-lifting of data that are piecewise-polynomials with respect to the submesh $\Ta$. Note that \cite[Thm.~7]{Braess2009} and \cite[Thm~2.3]{ErnVohralik2016} only hold for the case where the data are piecewise-polynomials on the elements $K\in \calT^n$ of the patch $\oma$. This generalisation is crucial for allowing arbitrary refinement and coarsening between time-steps.

\begin{lemma}[Polynomial degree-robust stability bound]\label{lem:p_robust_lifting}
For each $1\leq n\leq N$ and each $\ta\in\calVh$, recall that $\Ta$ denotes the restriction of $\CR$ to $\oma$ and that $\psia \in H^1(\oma)\cap \calP_{1}(\Ta)$ denotes the hat function associated with $\oma$. Let $\Gamma_{\ta} = \{ x\in \p\oma,\; \psia(x)=0\}$. Then, for any $f_h^{\ta,n} \in \Pal$ and any $\bxi_h^{\ta,n}\in \RTNal$, where it is further supposed that $(f_h^{\ta},\psia)_{\oma}=(\bxi_h^{\ta,n},\nabla \psia)_{\oma}$ if $\Gamma_{\ta} = \p\oma$, we have
\[
\min_{ \substack{\bm{v}_h \in \Hdivoma \cap \RTNa \\ \nabla{\cdot}\bm{v}_h = \psia f_h^{\ta,n} - \nabla \psia \cdot \bxi_h^{\ta,n} \\ \bm{v}_h\cdot\bm{n}=0 \text{ on }\Gamma_{\ta}  }} \norm{\bm{v}_h + \psia \bxi_h^{\ta,n}}_{\oma} \lesssim \sup_{\vphi \in H^1_0(\oma)\setminus\{0\}} \frac{(f_h^{\ta,n},\vphi)_{\oma}-(\bxi_h^{\ta,n},\nabla \vphi)_{\oma} }{\norm{\nabla \vphi}_{\oma}}
\]
\end{lemma}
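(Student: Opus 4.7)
The plan is to reduce the claim to a direct application of \cite{ErnSmearsVohralik2016c}, Theorem~1.2, which provides a polynomial-degree-robust discrete $\bm{H}(\Div)$-lifting for data that are piecewise polynomials with respect to the refined submesh $\Ta$. First, I would verify that the hypotheses of that theorem are met in our setting: the patch $\oma$ is equipped with the submesh $\Ta$ inherited from the common-refinement mesh $\CR$, which is shape-regular by assumption; the function $\psia$ is by construction continuous and affine on each element of $\Ta$; and the data $f_h^{\ta,n}\in\Pal$ and $\bxi_h^{\ta,n}\in\RTNal$ are piecewise polynomials with respect to $\Ta$ with degree compatible with the space $\RTNa$ used in the minimisation. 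For an interior vertex $\ta\in\calVhint$ we have $\Gamma_{\ta}=\p\oma$, and the stated compatibility condition $(f_h^{\ta,n},\psia)_{\oma}=(\bxi_h^{\ta,n},\nabla\psia)_{\oma}$ is exactly the solvability condition needed to construct a discrete field with normal trace zero on the whole of $\p\oma$; for a boundary vertex $\ta\in\calVhext$, the Dirichlet portion $\p\oma\cap\DO$ of the boundary absorbs the missing mean-value freedom, so no compatibility condition is required.

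Second, I would identify the right-hand side of the bound in our lemma with the dual norm appearing in \cite{ErnSmearsVohralik2016c}, Theorem~1.2. Integration by parts in the numerator of the supremum over $\vphi\in H^1_0(\oma)$ shows that $(f_h^{\ta,n},\vphi)_{\oma}-(\bxi_h^{\ta,n},\nabla\vphi)_{\oma}$ is precisely the action on $\vphi$ of the functional whose representative by the Laplacian gives $H^{-1}(\oma)$-norm of the data of the equilibration constraint, namely $\psia f_h^{\ta,n}-\nabla\psia\cdot\bxi_h^{\ta,n}$ after exploiting the partition-of-unity structure of the hat function. This matches the hypothesis of the cited theorem, and applying it delivers a discrete field $\bm{v}_h$ satisfying the constraint together with the boundary conditions on $\Gamma_{\ta}$, and whose distance to $-\psia\bxi_h^{\ta,n}$ is controlled, up to a constant depending only on the shape-regularity of $\Ta$, by this dual norm.

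The main obstacle, which is addressed in the cited reference and not by earlier works, is that $f_h^{\ta,n}$ and $\bxi_h^{\ta,n}$ are piecewise polynomial with respect to the finer mesh $\Ta$ rather than with respect to the patch elements of $\calT^n$. The classical polynomial-degree-robust lifting results of \cite{Braess2009}, Theorem~7, and \cite{ErnVohralik2016}, Theorem~2.3, assume the coarser piecewise structure and cannot be invoked here because $\Ta$ may be strictly finer than $\calT^n\cap\oma$ due to coarsening or refinement of the spatial mesh between time-steps. The construction in \cite{ErnSmearsVohralik2016c} circumvents this by splitting the lifting into a low-order contribution, stable on the coarser patch by the classical arguments, and local high-order corrections confined to each element of $\Ta$, glued together by polynomial-preserving trace operators. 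This decomposition, rather than the final citation, is where the genuine technical work lies; once it is in hand, the lemma follows at once.
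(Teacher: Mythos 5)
Your proposal is correct and follows essentially the same route as the paper: both reduce the lemma to a direct application of Theorem~1.2 of the cited reference on discrete $p$-robust $\bm{H}(\Div)$-liftings over the submesh $\Ta$, after checking the compatibility condition for interior vertices and identifying the right-hand side with the relevant dual norm. The only detail the paper makes explicit that you leave implicit is the simplification of the constant via $h_{\oma}\norm{\nabla\psia}_{\infty}\lesssim\norm{\psia}_{\infty}=1$ by shape-regularity, which is absorbed into your generic shape-regularity constant.
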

\begin{proof}
The result is directly obtained by applying \cite[Thm~1.2]{ErnSmearsVohralik2016c}, where $\Om$ there stands for $\oma$ here, where $\calT$ there stands for $\Ta$ here, and where $\psi_{\dagger}$ there stands for $\psia$ here. In applying \cite[Thm~1.2]{ErnSmearsVohralik2016c}, we use the fact that $\Gamma_{\ta}$ is the union of the faces of the mesh $\Ta$ on which $\psia$ vanishes, and we have simplified the constant appearing there by using the fact that $h_{\oma} \norm{\nabla \psia}_{\infty} \lesssim \norm{\psia}_{\infty}=1$ by shape-regularity.~\qquad\end{proof}

\subsection{Stability of the space-time flux equilibration}
For each $1\leq n\leq N$ and each $\ta \in \calVh$, we introduce the patch residual functional $\Ra$, with $\Ra \colon L^2(I_n,H^1_0(\oma))\tends\R$ defined by
\begin{equation}\label{eq:patch_residual}
\begin{aligned}
\pair{\Ra}{v}  =\int_{I_n} \big(\Pia f - \p_t (\Uth) ,v\big)_{\oma}  -\big(\nabla u_{\th},\nabla v \big)_\oma\dd t,
\end{aligned}
\end{equation}
for all $v\in L^2(I_n,H^1_0(\oma))$.
We are now ready to state the essential result that forms the starting point for our analysis of the efficiency of the error estimators.

\begin{lemma}[Space-time stability bound]\label{lem:flux_reconstruction_stability}
Let $\stha$ denote the patch-wise flux reconstructions of Definition~{\upshape\ref{def:flux_construction_1}}, and let $\Ra$ denote the local patch residual defined by \eqref{eq:patch_residual}. Then, we have
\begin{equation}\label{eq:flux_reconstruction_stability}
\left(\int_{I_n}\norm{\stha + \psia \nabla u_{\th}}_\oma^2\,\dd t\right)^{\frac{1}{2}}  \lesssim \sup_{v \in \calQ_{q_n}(I_n;H^1_0(\oma))\setminus\{0\}} \frac{\pair{\Ra}{v}}{\left(\int_{I_n}\norm{\nabla v}_{\oma}^2\,\dd t\right)^{\frac{1}{2}}},
\end{equation}
where $\calQ_{q_n}(I_n;H^1_0(\oma))$ denotes the space of $H^1_0(\oma)$-valued univariate polynomials of degree at most $q_n$ on $I_n$.
\end{lemma}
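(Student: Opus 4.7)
The plan is to decouple the space–time flux equilibration problem into $q_n+1$ independent spatial problems via Lemma~\ref{lem:decoupling}, apply the polynomial-degree robust lifting result of Lemma~\ref{lem:p_robust_lifting} on each of them, and then recombine the spatial dual-norm bounds into a single space–time dual norm by exploiting the $L^2(I_n)$-orthonormality of the basis $\{\phi_j^n\}_{j=0}^{q_n}$.

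More concretely, with the basis $\{\phi_j^n\}$ as in Lemma~\ref{lem:decoupling}, I would expand $\Pia f - \p_t \Uth = \sum_j f_{h,j}^{\ta,n}\phi_j^n$ and $\nabla u_{\th} = \sum_j \bxi_{h,j}^{\ta,n}\phi_j^n$, where $f_{h,j}^{\ta,n}\in \Pal$ (this uses $p_\ta = \max_{\elCR\in\Ta}(p_{\elCR}+1)$ together with $\Uth\in\calQ_{q_n+1}(I_n;\VCR)$, hence $\p_t\Uth\in\Pa\subseteq\calQ_{q_n}(I_n;\Pal)$) and $\bxi_{h,j}^{\ta,n}\in\RTNal$ (since $\nabla u_{\th}|_{\elCR}$ is at most of degree $p_K-1\leq p_\ta-2$). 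Then $\tauthj = -\psia\bxi_{h,j}^{\ta,n}$ and $g_{h,j}^{\ta,n} = \psia f_{h,j}^{\ta,n} - \nabla\psia\cdot\bxi_{h,j}^{\ta,n}$, so the constraint in \eqref{eq:decoupling_2} exactly matches the form required by Lemma~\ref{lem:p_robust_lifting}. The compatibility $(f_{h,j}^{\ta,n},\psia)_\oma = (\bxi_{h,j}^{\ta,n},\nabla\psia)_\oma$ needed for interior vertices follows from testing \eqref{eq:gthan_mean_value_zero} against the univariate polynomial $\phi_j^n$. Applying Lemma~\ref{lem:p_robust_lifting} yields, for each $j$,
\begin{equation*}
\norm{\sthj + \psia\bxi_{h,j}^{\ta,n}}_\oma \lesssim N_j \coloneqq \sup_{\vphi\in H^1_0(\oma)\setminus\{0\}} \frac{(f_{h,j}^{\ta,n},\vphi)_\oma - (\bxi_{h,j}^{\ta,n},\nabla\vphi)_\oma}{\norm{\nabla\vphi}_\oma},
\end{equation*}
and by $L^2(I_n)$-orthonormality of $\{\phi_j^n\}$ together with $\stha = \sum_j \sthj\phi_j^n$ and $\psia\nabla u_{\th}=\sum_j \psia\bxi_{h,j}^{\ta,n}\phi_j^n$, one obtains $\int_{I_n}\norm{\stha+\psia\nabla u_{\th}}_\oma^2\dd t = \sum_j \norm{\sthj+\psia\bxi_{h,j}^{\ta,n}}_\oma^2 \lesssim \sum_j N_j^2$.

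The remaining step is to identify $\sum_j N_j^2$ with the square of the space–time dual norm on the right-hand side of \eqref{eq:flux_reconstruction_stability}. I would introduce the Riesz representatives $w_j\in H^1_0(\oma)$ defined by $(\nabla w_j,\nabla\vphi)_\oma = (f_{h,j}^{\ta,n},\vphi)_\oma - (\bxi_{h,j}^{\ta,n},\nabla\vphi)_\oma$ for all $\vphi\in H^1_0(\oma)$, so that $N_j = \norm{\nabla w_j}_\oma$, and then build the discrete-in-time test function $w \coloneqq \sum_j w_j\phi_j^n \in \calQ_{q_n}(I_n;H^1_0(\oma))$. A direct computation using the expansions of $\Pia f - \p_t\Uth$ and $\nabla u_{\th}$ together with the orthonormality of $\{\phi_j^n\}$ gives both $\pair{\Ra}{w} = \sum_j N_j^2$ and $\int_{I_n}\norm{\nabla w}_\oma^2\dd t = \sum_j N_j^2$, so that $\bigl(\sum_j N_j^2\bigr)^{1/2}$ is majorised by the supremum on the right-hand side of \eqref{eq:flux_reconstruction_stability}. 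Combining with the bound of the previous paragraph and taking square roots concludes the proof.

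The main obstacle I expect is making sure the polynomial-degree bookkeeping genuinely places the data $(f_{h,j}^{\ta,n},\bxi_{h,j}^{\ta,n})$ in the spaces $(\Pal,\RTNal)$ required by Lemma~\ref{lem:p_robust_lifting}; this is precisely where the enriched patch degree $p_\ta = \max_{\elCR\in\Ta}(p_{\elCR}+1)$ plays its role, and it is also what permits the coarsening/refinement freedom used in the sequel. Everything else is either routine orthonormal expansion in time or a direct application of the two cited lemmas.
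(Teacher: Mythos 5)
Your proposal is correct and follows essentially the same route as the paper's proof: expand the data in an $L^2(I_n)$-orthonormal temporal basis, verify via \eqref{eq:patch_polynomial_degree} and \eqref{eq:gthan_mean_value_zero} that each temporal mode satisfies the hypotheses of Lemma~\ref{lem:p_robust_lifting}, and recombine the modewise dual norms $N_j$ into the space–time dual norm through the Riesz representatives $w_j$ and the test function $\sum_j w_j\phi_j^n$. The only cosmetic difference is that you bound the decoupled solutions $\sthj$ of Lemma~\ref{lem:decoupling} directly by their minimality, whereas the paper builds a competitor for the space–time minimization \eqref{eq:stha_minimization_def}; the two are interchangeable.
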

\begin{proof}
The definition of $\stha\in\Vthan$ in \eqref{eq:stha_minimization_def} implies that it is enough to show that there exists a $\bm{v}_h \in \Vthan$ such that $\nabla{\cdot}\bm{v}_h = \gtautha$ and such that $\int_{I_n}\norm{\bm{v}_h - \tautha}_{\oma}^2\,\dd t$ is bounded by the right-hand side of \eqref{eq:flux_reconstruction_stability}.
Let $\{\phi_j^n\}_{j=0}^{q_n}$ be an $L^2$-orthonormal basis of polynomials on $I_n$, and let the functions $\{\fthj\}_{j=0}^{q_n}  $ and $\{\bxithj\}_{j=0}^{q_n}$ be defined by
\begin{equation}\label{eq:fthj_bxithj_def}
\begin{aligned}
\fthj \coloneqq \int_{I_n} (\Pia f - \p_t\Uth) \phi_j^n \,\dd t, & && \bxithj \coloneqq \int_{I_n} \nabla u_h \,\phi_j^n\,\dd t.
\end{aligned}
\end{equation}
It will be useful to keep in mind that $\gtautha = \sum_{j=0}^{q_n} [ \psia \fthj - \nabla \psia \cdot \bxithj] \phi_j^n$ and that $\tautha = -\sum_{j=0}^{q_n} \psia \bxithj \phi_j^n$.
Let $\Gamma_{\ta}\coloneqq \{x\in \p\oma,\;\psia(x)=0\}$; note that if $\ta\in\calVhint$, then $\Gamma_{\ta} = \p\oma$, whereas $\Gamma_{\ta}$ is a strict subset of $\p\oma$ if $\ta\in\calVhext$.
We will now use Lemma~\ref{lem:p_robust_lifting} to show that, for each $0\leq j \leq q_n$, there exists $\bvthj  \in \Hdivoma \cap \RTNa$ such that
\begin{subequations}\label{eq:bvthj_properties}
\begin{gather}
\nabla{\cdot} \bvthj  = \psia \fthj - \nabla \psia \cdot \bxithj\quad \text{in }\oma,\qquad
\bvthj \cdot \bm{n}  =0 \quad \text{on }\Gamma_{\ta}, \\
\norm{\bvthj + \psia \bxithj}_{\oma} \lesssim \norm{ \Raj}_{H^{-1}(\oma)},
\end{gather}
\end{subequations}
where $\Raj \in H^{-1}(\oma)$ is defined by $\pair{\Raj}{v} \coloneqq (\fthj, v)_{\oma} - (\bxithj,\nabla v)_{\oma}$ for all $v\in H^1_0(\oma)$.
To check the hypotheses of Lemma~\ref{lem:p_robust_lifting}, we start by observing that the choice of $p_{\ta}$ in \eqref{eq:patch_polynomial_degree} implies that $\fthj \in \Pal$ and that $\bxithj \in \RTNal$ for all $0\leq j \leq q_n$.
For any interior vertex $\ta\in \calVhint$, it is seen from~\eqref{eq:gthan_mean_value_zero} that $(\fthj ,\psia)_{\oma}=(\bxithj,\nabla \psia)_{\oma}$ for all $0\leq j \leq q_n$. Therefore, the hypotheses of Lemma~\ref{lem:p_robust_lifting} are satisfied, and there exists $\bvthj \in  \Hdivoma \cap \RTNa$ satisfying~\eqref{eq:bvthj_properties}.

Next, we claim that $\bvthj \in \Va$ for all $0\leq j \leq q_n$. Indeed, the definition of $\Gamma_{\ta}$ implies that $\Gamma_{\ta}=\p\oma$ for all $\ta\in\calVhint$ and that $\p\oma\setminus \DO \subset \Gamma_{\ta}$ for all $\ta\in\calVhext$. Therefore, we have $\bvthj \in \Va$ for all $0\leq j \leq q_n$. It then follows that the function $\bm{v}_{\th}^{\ta,n} \coloneqq \sum_{j=0}^{q_n} \bvthj \,\phi_j^n  \in \Vthan $ and that this function satisfies
\begin{subequations}\label{eq:bvth_properties}
\begin{gather}
\nabla{\cdot} \bm{v}_{\th}^{\ta,n} = \sum_{j=0}^{q_n} [ \psia \fthj - \nabla \psia \cdot \bxithj] \phi_j^n = \gtautha, \\
\int_{I_n} \norm{\bm{v}_{\th}^{\ta,n} - \tautha }_{\oma}^2 \,\dd t = \sum_{j=0}^{q_n} \norm{\bvthj + \psia \bxithj}_{\oma}^2 \lesssim \sum_{j=0}^{q_n} \norm{\Raj}_{H^{-1}(\oma)}^2, \label{eq:bvth_prop_norm}
\end{gather}
\end{subequations}
where the equality in~\eqref{eq:bvth_prop_norm} results from the orthonormality of $\{\phi_j^n\}_{j=0}^{q_n}$.
We now claim that
\begin{equation}\label{eq:stability_temporal_decomposition}
\left\{\sum_{j=0}^{q_n} \norm{\Raj}_{H^{-1}(\oma)}^2 \right\}^{\frac{1}{2}} \leq \sup_{v \in \calQ_{q_n}(I_n;H^1_0(\oma))\setminus\{0\}} \frac{\pair{\Ra}{v}}{\left(\int_{I_n}\norm{\nabla v}_{\oma}^2\,\dd t\right)^{\frac{1}{2}}}.
\end{equation}
For each $j=0,\dots,q_n$, we define $z_j \in H^1_0(\oma)$ by $(\nabla z_j,\nabla v)_{\oma} = \pair{\Raj}{v}$ for all $v\in H^1_0(\oma)$. It is then straightforward to show that $\norm{\nabla z_j}_\oma^2 = \pair{\Raj}{z_j} = \norm{\Raj}_{H^{-1}(\oma)}^2$ for each $j=0,\dots,q_n$.
Then, we define $z_*\in  \calQ_{q_n}(I_n;H^1_0(\oma))$ by $z_* \coloneqq \sum_{j=0}^{q_n} z_j \phi_{j}^n $. It follows from the orthonormality of the temporal basis that
$\int_{I_n}\norm{\nabla z_*}^2_\oma \,\dd t =  \sum_{j=0}^{q_n} \norm{\Raj}_{H^{-1}(\oma)}^2$.
Fubini's theorem and \eqref{eq:fthj_bxithj_def} imply that
\[
\begin{split}
\pair{\Ra}{z_*}
&= \sum_{j=0}^{q_n} \int_{I_n} (\Pia f - \p_t\Uth, \phi_j^n  z_j)_{\oma} -( \nabla u_h,\phi_j^n  \nabla z_j)_{\oma} \,\dd t \\
& = \sum_{j=0}^{q_n} \left\{ \left( {\textstyle\int}_{I_n} (\Pia f - \p_t\Uth \phi_j^n\,\dd t,  z_j\right)_{\oma} - \left({\textstyle \int}_{I_n} \nabla u_h \phi_j^n\,\dd t, \nabla  z_j\right)_{\oma} \right\}\\
& = \sum_{j=0}^{q_n} \left\{ ( \fthj , z_j)_{\oma} - (\bxithj, \nabla z_j)_{\oma} \right\}
= \sum_{j=0}^{q_n} \pair{\Raj}{z_j} = \sum_{j=0}^{q_n} \norm{\Raj}_{H^{-1}(\oma)}^2.
\end{split}
\]
Hence, the above identities immediately imply \eqref{eq:stability_temporal_decomposition}.
Therefore, we combine~\eqref{eq:bvth_properties} and \eqref{eq:stability_temporal_decomposition} to deduce that $\bm{v}_{\th}^{\ta,n} \in \Vthan$ satisfies $\nabla{\cdot} \bm{v}_{\th}^{\ta,n} = \gtautha$ and $\int_{I_n}\norm{\bm{v}_h - \tautha}_{\oma}^2\,\dd t$ is bounded by the right-hand side of \eqref{eq:flux_reconstruction_stability}. This implies \eqref{eq:flux_reconstruction_stability} as explained above.
\quad\end{proof}

\subsection{Local efficiency} We can now prove the local efficiency bound~\eqref{eq:Y_norm_guaranteed_efficiency_lower_local}.

\paragraph{Proof of the local efficiency bound~\eqref{eq:Y_norm_guaranteed_efficiency_lower_local}}
Consider a time-step $I_n$ and an element $K\in\calT^n$. First, note that $[\etaJ]^2 \leq \sum_{\ta\in \calV_K} \Eya^2$ trivially, where we recall that $\calV_K$ denotes the set of vertices of $K$. Hence, it remains only to bound $\int_{I_n} [\etaEq]^2 \,\dd t$. To this end, observe that $\sth|_{K\times I_n} = \sum_{\ta\in\calV_K} \stha|_{K\times I_n}$, and that
\begin{align}
\int_{I_n} [\etaEq]^2 \,\dd t &= \int_{I_n} \norm { {\textstyle\sum}_{\ta\in \calV_K} (\stha + \psia \nabla\Uth)}_{K}^2\,\dd t \nonumber\\
& \leq (\abs{\calV_K}+1)   \int_{I_n} \sum_{\ta\in\calV_K} \norm{\stha + \psia \nabla u_{\th}}_K^2 + \norm{\nabla(u_{\th}-\Uth)}_K^2\,\dd t \nonumber \\
& \leq (\abs{\calV_K}+1)   \int_{I_n} \sum_{\ta\in\calV_K} \norm{\stha + \psia \nabla u_{\th}}_{\oma}^2 \,\dd t + [\etaJ]^2 ,\label{eq:Y_norm_local_efficiency_1}
\end{align}
where $\abs{\calV_K}$ is the number of vertices of the element $K$, which equals $d+1$ for simplices and where we have used that $\|{\cdot}\|_K \leq \|{\cdot}\|_{\oma}$ and the definition of $\etaJ$ in the last line.

Keeping in mind Lemma~\ref{lem:flux_reconstruction_stability}, we therefore turn our attention to bounding the dual norm of the patchwise residuals $\Ra$ for each $\ta\in \calV_K$. Consider an arbitrary $v \in \calQ_{q_n}(I_n,H^1_0(\oma))$ such that $\int_{I_n} \norm{\nabla v}_{\oma}^2 \,\dd t =1$; then~\eqref{eq:Y_formulation} implies that
\[
\begin{split}
\pair{\Ra}{v} &= \int_{I_n} \big(\Pia f, v \big)_{\oma} - \big(\p_t (\Uth) ,v \big)_{\oma}-\big(\nabla u_{\th},\nabla v\big)_\oma \,\dd t
\\ & = \int_{I_n} \pair{\p_t(u-\Uth)}{v} + (\nabla(u-\Uth),\nabla v)_{\oma}\, \dd t
 \\ &\qquad+\int_{I_n} (\nabla(\Uth-u_{\th}),\nabla v )_{\oma} - ( f-\Pia f,v)_{\oma}\,\dd t
 \\  & \eqqcolon E_1 + E_2 + E_3 + E_4.
\end{split}
\]
The definition of the $\|{\cdot}\|_{H^{-1}(\oma)}$-norm and the Cauchy--Schwarz inequality then yield $\abs{E_1+E_2+E_3} \lesssim \Eya$.
Finally, we find that $\abs{E_4} \leq \etaOsca$ where $\etaOsca$ is defined in~\eqref{eq:patch_data_oscillation}.
Therefore, we find that
\begin{equation}
\sup_{v\in \calQ_{q_n}(I_n,H^1_0(\oma))\setminus\{0\}} \frac{ \pair{\Ra}{v} }{ \left( \int_{I_n} \norm{\nabla v}^2_{\oma} \,\dd t \right)^{\frac{1}{2}} } \lesssim \left( \Eya^2 + [\etaOsca]^2 \right)^{\frac{1}{2}}.
\end{equation}
Recalling \eqref{eq:flux_reconstruction_stability} of Lemma~\ref{lem:flux_reconstruction_stability}, we deduce that, for each $\ta\in \calV_K$,
\begin{equation}\label{eq:Y_norm_local_efficiency_2}
\int_{I_n} \norm{\stha + \psia \nabla u_{\th}}_{\oma}^2\,\dd t \lesssim \Eya^2 + [\etaOsca]^2 ,
\end{equation}
which in combination with \eqref{eq:Y_norm_local_efficiency_1}, yields the desired result~\eqref{eq:Y_norm_guaranteed_efficiency_lower_local}.
\qquad\endproof

\subsection{Global efficiency}
We finally prove the global efficiency bounds~\eqref{eq:Y_norm_guaranteed_efficiency_lower_global} and \eqref{eq:Y_norm_alternative_efficiency}.
\paragraph{Proof of \eqref{eq:Y_norm_guaranteed_efficiency_lower_global} and \eqref{eq:Y_norm_alternative_efficiency}}
Recalling the definition~\eqref{eq:EY_norm_def_localized} of the localized seminorms $\abs{\cdot}_{\nEya}$, we claim that
\begin{multline}\label{eq:localization_inequality}
\sum_{\ta\in\calVh} \Eya^2 \lesssim \int_{I_n}\norm{\p_t(u-\Uth)}_{H^{-1}(\Om)}^2+\norm{\nabla(u-\Uth)}^2\,\dd t \\ +\int_{I_n}\norm{\nabla(u_{\th}-\Uth)}^2\,\dd t .
\end{multline}
The proof is essentially a counting argument after local Riesz mappings are
introduced to evaluate the negative norms $ \norm{\p_t(u-\Uth)}_{H^{-1}(\oma)}^2$ for all $\ta\in\calVh$, see also~\cite{BlechtaMalekVohralik2016}.
Summing~\eqref{eq:Y_norm_guaranteed_efficiency_lower_local} over $K\in\calT^n$ and using~\eqref{eq:localization_inequality} then leads to
\begin{multline}
\sum_{K\in\calT^n} \left[ \int_{I_n} [\etaEq]^2 \,\dd t + [\etaJ]^2  \right] \lesssim \int_{I_n} \norm{\p_t(u-\Uth)}_{H^{-1}(\Om)}^2+\norm{\nabla(u-\Uth)}^2 \,\dd t  \\+ \int_{I_n}\norm{\nabla(u_{\th}-\Uth)}^2\,\dd t
+ \sum_{\ta\in\calVh}  [\etaOsca]^2 . \label{eq:global_eff_both}
\end{multline}
Summing the bound~\eqref{eq:global_eff_both} for all $1\leq n\leq N$ immediately yields~\eqref{eq:Y_norm_guaranteed_efficiency_lower_global}, whereas \eqref{eq:Y_norm_alternative_efficiency} results from~\eqref{eq:global_eff_both} after invoking~\eqref{eq:main_jump_bound} and observing that $\norm{\calR_Y(\Uth)|_{I_n}}_{X^\prime}^2$ is bounded from above by $\int_{I_n} \norm{\p_t(u-\Uth)}_{H^{-1}(\Om)}^2+\norm{\nabla(u-\Uth)}^2 \,\dd t$.
\qquad\endproof

\section{Conclusion and outlook}
We have studied a posteriori error estimates for $hp$-$\tau q$ discretizations of parabolic problems based on arbitrarily high-order conforming Galerkin spatial discretizations and discontinuous Galerkin temporal discretizations. The equilibrated flux reconstructions lead to guaranteed upper bounds for the norm $\norm{u-u_{\th}}_{\nEy}$. Furthermore, the estimators have the key property of being unconditionally locally space-time efficient with respect to the local errors $\Eya$, with constants that are fully robust with respect to both the spatial and temporal approximation orders. The estimators are flexible in the sense that they do not require restrictive transition conditions on the refinement and coarsening between time-steps. We also showed that the composite norm of the error $\Ey$ is globally equivalent to $\norm{u-\Uth}_Y$ up to the minimum of coarsening error and data oscillation, with polynomial degree-robust constants in the equivalence.
Finally, the analysis given here can be extended in various directions: in \cite{ErnSmearsVohralik2016b}, we show that the equilibrated flux reconstruction employed here can also be used for obtaining a posteriori estimates for the $X$-norm of the error, with guaranteed upper bounds, and local space-time efficiency under the natural parabolic condition that $h^2\lesssim \tau$. Furthermore, the adaptation of Lemma~\ref{lem:p_robust_lifting} to the case of residual-based estimators is currently under investigation.



\begin{thebibliography}{99}

\bibitem{BergamMghazli2005}
{\sc A.~Bergam, C.~Bernardi, and Z.~Mghazli}, {\em A posteriori analysis of the
  finite element discretization of some parabolic equations}, Math. Comp., 74
  (2005), pp.~1117--1138 (electronic).

\bibitem{BlechtaMalekVohralik2016} {\sc J.~Blechta, J.~M{\'a}lek, and M.~Vohral{\'i}k}, {\em Localization of the ${W}^{-1,q}$ norm for local a posteriori efficiency}, HAL preprint 01332481, (2016).

\bibitem{Braess2009}
{\sc D.~Braess, V.~Pillwein, and J.~Sch{\"o}berl}, {\em Equilibrated residual
  error estimates are {$p$}-robust}, Comput. Methods Appl. Mech. Engrg., 198
  (2009), pp.~1189--1197.

\bibitem{BraessSchoberl2008}
{\sc D.~Braess and J.~Sch{\"o}berl}, {\em Equilibrated residual error estimator
  for edge elements}, Math. Comp., 77 (2008), pp.~651--672.

\bibitem{ChenFeng2004}
{\sc Z.~Chen and J.~Feng}, {\em An adaptive finite element algorithm with
  reliable and efficient error control for linear parabolic problems}, Math.
  Comp., 73 (2004), pp.~1167--1193 (electronic).


\bibitem{DestuynderMetivet1999}
{\sc P.~Destuynder and B.~M{\'e}tivet}, {\em Explicit error bounds in a
  conforming finite element method}, Math. Comp., 68 (1999), pp.~1379--1396.

\bibitem{DiPietroVohralikSoleiman2015}
{\sc D.~A. Di~Pietro, M.~Vohral{\'{\i}}k, and S.~Yousef}, {\em Adaptive
  regularization, linearization, and discretization and a posteriori error
  control for the two-phase {S}tefan problem}, Math. Comp., 84 (2015),
  pp.~153--186.

\bibitem{DolejsiErnVohralik2013}
{\sc V.~Dolej{\v{s}}{\'{\i}}, A.~Ern, and M.~Vohral{\'{\i}}k}, {\em A framework
  for robust a posteriori error control in unsteady nonlinear
  advection-diffusion problems}, SIAM J. Numer. Anal., 51 (2013), pp.~773--793.

\bibitem{DolejsiErnVohralik2016}
\leavevmode\vrule height 2pt depth -1.6pt width 23pt, {\em hp-adaptation driven by polynomial-degree-robust a posteriori error estimates for elliptic problems}, to appear in SIAM J. Sci. Comput. (2016).

\bibitem{DolejsiRoskovecVlasak2016} {\sc V.~Dolej{\v{s}}{\'{\i}}, F.~Roskovec, and M.~Vlas{\'a}k}, {\em A posteriori error estimates for nonstationary problems}, proceedings of Enumath 2016, submitted.

\bibitem{Eriksson1995}
{\sc K.~Eriksson and C.~Johnson}, {\em Adaptive finite element methods for
  parabolic problems. {II}. {O}ptimal error estimates in {$L_\infty L_2$} and
  {$L_\infty L_\infty$}}, SIAM J. Numer. Anal., 32 (1995), pp.~706--740.

\bibitem{ErnSchieweck2016}
{\sc A.~Ern and F.~Schieweck}, {\em Discontinuous Galerkin method in time combined with a stabilized finite
element method in space for linear first-order PDEs}, Math. Comp., 85(301) (2016), pp.~2099--2129.

\bibitem{ErnVohralik2010}
{\sc A.~Ern and M.~Vohral{\'{\i}}k}, {\em A posteriori error estimation based
  on potential and flux reconstruction for the heat equation}, SIAM J. Numer.
  Anal., 48 (2010), pp.~198--223.

\bibitem{ErnVohralik2015}
\leavevmode\vrule height 2pt depth -1.6pt width 23pt, {\em Polynomial-degree-robust a posteriori estimates in a unified setting for conforming, nonconforming, discontinuous {G}alerkin, and mixed discretizations}, SIAM J. Numer. Anal., 53 (2015), pp.~1058--1081.

\bibitem{ErnVohralik2016}
\leavevmode\vrule height 2pt depth -1.6pt width 23pt, {\em Stable $H^1$ and $\bm{H}(\Div)$ polynomial extensions for polynomial-degree-robust potential and flux reconstruction in three space dimensions}, in preparation.

\bibitem{ErnSmearsVohralik2016b}
{\sc A.~Ern, I.~Smears and M.~Vohral{\'{\i}}k}, {\em Equilibrated flux a posteriori error estimates in $L^2(H^1)$-norms for high-order discretizations of parabolic problems}, in preparation.

\bibitem{ErnSmearsVohralik2016c}
\leavevmode\vrule height 2pt depth -1.6pt width 23pt, {\em Discrete $p$-robust $\bm{H}(\Div)$-liftings and a posteriori error analysis of elliptic problems with $H^{-1}$ source terms}, submitted for publication. Preprint available at \url{https://hal.archives-ouvertes.fr/hal-01377007}.

\bibitem{Evans1998}
{\sc L.~C. Evans}, {\em Partial differential equations}, vol.~19 of Graduate
  Studies in Mathematics, American Mathematical Society, Providence, RI, 1998.

\bibitem{GaspozKreuzerSiebertZiegler2016}
{\sc F.~D.~Gaspoz, C.~Kreuzer, K.~G.~Siebert, and D.~A.~Ziegler}, {\em A convergent time-space adaptive dG(s) finite element method for parabolic problems motivated by equal error distribution}, in preparation.

\bibitem{GeorgoulisLakkisVirtanen2011}
{\sc E.~H. Georgoulis, O.~Lakkis, and J.~M. Virtanen}, {\em A posteriori error
  control for discontinuous {G}alerkin methods for parabolic problems}, SIAM J.
  Numer. Anal., 49 (2011), pp.~427--458.

\bibitem{Kreuzer2013}
{\sc C.~Kreuzer}, {\em Reliable and efficient a posteriori error estimates for
  finite element approximations of the parabolic {$p$}-{L}aplacian}, Calcolo,
  50 (2013), pp.~79--110.

\bibitem{KreuzerMollerSchmidtSiebert2012}
{\sc C.~Kreuzer, C.~A. M{\"o}ller, A.~Schmidt, and K.~G. Siebert}, {\em Design
  and convergence analysis for an adaptive discretization of the heat
  equation}, IMA J. Numer. Anal., 32 (2012), pp.~1375--1403.


\bibitem{LakkisMakridakis2006}
{\sc O.~Lakkis and C.~Makridakis}, {\em Elliptic reconstruction and a
  posteriori error estimates for fully discrete linear parabolic problems},
  Math. Comp., 75 (2006), pp.~1627--1658.

\bibitem{LakkisMakridakisPryer2015}
{\sc O.~Lakkis, C.~Makridakis, and T.~Pryer}, {\em A comparison of duality and
  energy a posteriori estimates for {$L_\infty(0,T;L_2(\Omega))$} in parabolic
  problems}, Math. Comp., 84 (2015), pp.~1537--1569.

\bibitem{Makridakis2003}
{\sc C.~Makridakis and R.~H. Nochetto}, {\em Elliptic reconstruction and a
  posteriori error estimates for parabolic problems}, SIAM J. Numer. Anal., 41
  (2003), pp.~1585--1594.

\bibitem{Makridakis2006}
\leavevmode\vrule height 2pt depth -1.6pt width 23pt, {\em A posteriori error
  analysis for higher order dissipative methods for evolution problems}, Numer.
  Math., 104 (2006), pp.~489--514.

\bibitem{Melenk2005}
{\sc J.~M. Melenk}, {\em {$hp$}-interpolation of nonsmooth functions and an
  application to {$hp$}-a posteriori error estimation}, SIAM J. Numer.
  Anal., 43 (2005), pp.~127--155 (electronic).

\bibitem{MelenkWolhmuth2001}
{\sc J.~M. Melenk and B.~I. Wohlmuth}, {\em On residual-based a posteriori
  error estimation in {$hp$}-{FEM}}, Adv. Comput. Math., 15 (2001),
  pp.~311--331 (2002).
\newblock A posteriori error estimation and adaptive computational methods.

\bibitem{NicaiseSoualem2005}
{\sc S.~Nicaise and N.~Soualem}, {\em A posteriori error estimates for a
  nonconforming finite element discretization of the heat equation}, M2AN Math.
  Model. Numer. Anal., 39 (2005), pp.~319--348.

\bibitem{Picasso1998}
{\sc M.~Picasso}, {\em Adaptive finite elements for a linear parabolic
  problem}, Comput. Methods Appl. Mech. Engrg., 167 (1998), pp.~223--237.

\bibitem{Repin2002}
{\sc S.~Repin}, {\em Estimates of deviations from exact solutions of
  initial-boundary value problem for the heat equation}, Atti Accad. Naz.
  Lincei Cl. Sci. Fis. Mat. Natur. Rend. Lincei (9) Mat. Appl., 13 (2002),
  pp.~121--133.

\bibitem{Schotzau2000}
{\sc D.~Sch{\"o}tzau and C.~Schwab}, {\em Time discretization of parabolic
  problems by the {$hp$}-version of the discontinuous {G}alerkin finite element
  method}, SIAM J. Numer. Anal., 38 (2000), pp.~837--875.

\bibitem{Schotzau2010}
{\sc D.~Sch{\"o}tzau and T.~P. Wihler}, {\em A posteriori error estimation for
  {$hp$}-version time-stepping methods for parabolic partial differential
  equations}, Numer. Math., 115 (2010), pp.~475--509.

\bibitem{Schwab1998}
{\sc C.~Schwab}, {\em {$p$}- and {$hp$}-finite element methods}, Numerical
  Mathematics and Scientific Computation, The Clarendon Press Oxford University
  Press, New York, 1998.
\newblock Theory and applications in solid and fluid mechanics.

\bibitem{Smears2015a}
{\sc I.~Smears}, {\em Robust and efficient preconditioners for the
  discontinuous {G}alerkin time-stepping method}, accepted for publication in IMA Journal of Numerical Analysis (2016). Arxiv preprint available at \url{http://arxiv.org/abs/1608.08184}.

\bibitem{Verfurth1998}
{\sc R.~Verf{\"u}rth}, {\em A posteriori error estimates for nonlinear
  problems: {$L^r(0,T;W^{1,\rho}(\Omega))$}-error estimates for finite element
  discretizations of parabolic equations}, Numer. Methods Partial Differential
  Equations, 14 (1998), pp.~487--518.

\bibitem{Verfurth2003}
\leavevmode\vrule height 2pt depth -1.6pt width 23pt, {\em A posteriori error
  estimates for finite element discretizations of the heat equation}, Calcolo,
  40 (2003), pp.~195--212.

\bibitem{Verfurth2013}
\leavevmode\vrule height 2pt depth -1.6pt width 23pt, {\em A posteriori error estimation techniques for finite
  element methods}, Numerical Mathematics and Scientific Computation, Oxford
  University Press, Oxford, 2013.

\bibitem{WerderGerdesSchotzauSchwab2001}
{\sc T.~Werder, K.~Gerdes, D.~Sch{\"o}tzau, and C.~Schwab}, {\em
  {$hp$}-discontinuous {G}alerkin time stepping for parabolic problems},
  Comput. Methods Appl. Mech. Engrg., 190 (2001), pp.~6685--6708.

\bibitem{Wloka1987}
{\sc J.~Wloka}, {\em Partial differential equations}, Cambridge University
  Press, Cambridge, 1987.

\end{thebibliography}
\end{document}